\theoremstyle{plain}
\newtheorem{theorem}{Theorem}[section]
\newtheorem{proposition}[theorem]{Proposition}
\newtheorem{lemma}[theorem]{Lemma}
\newtheorem{problem}[theorem]{Problem}
\newtheorem{definition-proposition}[theorem]{Definition-Proposition}
\newcounter{xpl}[section]
\newenvironment{remark}{\refstepcounter{theorem} \medskip \noindent {\bf  Remark \arabic{section}.\arabic{theorem}}}{\hfill\mbox{}\bigskip}
\newcounter{thmlist}
\newcommand{\bfC}{{\mathbb C}}
\newcommand{\barpartial}{{\overline \partial}}
\newcommand{\mapright}[1]{\smash{\mathop{   \hbox to 0.7cm{\rightarrowfill}}
 \limits^{#1}}}
\def\w{\textbf{w}}
\begin{document}

\title{An example of asymptotically Chow unstable manifolds with constant scalar curvature}

\author{Hajime Ono}
\address{Department of Mathematics,
Faculty of Science and Technology,
Tokyo University of Science,
2641 Yamazaki, Noda,
Chiba 278-8510, Japan}
\email{ono\_hajime@ma.noda.tus.ac.jp}

\author{Yuji Sano}
\address{Kumamoto University\\
Graduate School of Science and Technology\\
2-39-1, Kurokami\\
Kumamoto, 860-8555 Japan}
\email{sano@sci.kumamoto-u.ac.jp}

\author{Naoto  Yotsutani}
\address{University of Science and Technology of China\\
School of Mathematics\\
Hefei, Anhui\\
230026 P.R. China}
\email{naotoy@ustc.edu.cn}
    
\date{\today}

\pagestyle{plain}


\begin{abstract} 
Donaldson proved that if a polarized manifold $(V, L)$ has constant
  scalar curvature K\"ahler metrics in $c_1(\!L)$ and its automorphism
  group $\mathrm{Aut}(V\!\!,\!L)$ is discrete, $(V,L)$ is asymptotically
  Chow stable.  In this paper, we shall show an example which implies
  that the above result does not hold in the case where
  $\mathrm{Aut}(V, L)$ is not discrete.

\end{abstract}
\keywords{asymptotic Chow stability,  K\"ahler metric of constant scalar curvature, toric Fano manifold, Futaki invariant}¡¡
\subjclass{Primary 53C55, Secondary 53C21, 55N91}

\maketitle

\section{Introduction}
One of the main issues in K\"ahler geometry is the existence problem
of K\"ahler metrics with constant scalar curvature on a given K\"ahler
manifold.  Through Yau's conjecture \cite{yau0602} and the works of
Tian \cite{tian97}, Donaldson~\cite{donaldson02}, this problem is
formulated as follows; {\sl The existence of K\"ahler metrics with
  constant scalar curvature in a fixed integral K\"ahler class would
  be equivalent to a suitable notion of stability of manifolds in the
  sense of Geometric Invariant Theory.}  Though remarkable progress is
made recently in this problem, we shall focus only on the related
results to our purpose.  Let $(V, L)$ be an $m$-dimensional polarized
manifold, that is to say, $L\to V$ is an ample line bundle over an
$m$-dimensional compact complex manifold $V$.  Then the first Chern
class $c_1(L)$ of $L$ can be regarded as a K\"ahler class of $V$.  Let
$\mathrm{Aut}(V, L)$ be the group of holomorphic automorphisms of $(V,
L)$ modulo the trivial automorphism
$\mathbb{C}^\times:=\mathbb{C}-\{0\}$.  In \cite{donaldson01},
Donaldson proved that

\begin{theorem}[Donaldson]\label{thm:donaldson}
  Let $(V,L)$ be a polarized manifold.  Assume that $\mathrm{Aut}(V,
  L)$ is discrete.  If $(V,L)$ has constant scalar curvature K\"ahler
  (cscK) metrics in $c_1(L)$, $(V, L)$ is asymptotically Chow stable.
\end{theorem}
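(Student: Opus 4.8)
The plan is to prove the statement directly by producing, for every sufficiently large $k$, a \emph{balanced} embedding of $V$ into $\mathbb{P}(H^0(V,L^{k})^{*})$, and then to invoke the Kempf--Ness--type correspondence between balanced metrics and Chow stability. Recall that, for fixed $k$, the smooth subvariety $X_k := \Phi_{|L^{k}|}(V) \subset \mathbb{P}^{N_k}$, $N_k+1 = \dim H^0(V,L^{k})$, is Chow polystable if and only if $X_k$ can be moved by $\mathrm{SL}(N_k+1,\mathbb{C})$ into \emph{balanced position}, i.e. there is a basis $\{s_\alpha\}$ of $H^0(V,L^{k})$ with
\begin{equation*}
\int_{V}\frac{s_\alpha\,\overline{s_\beta}}{\sum_\gamma |s_\gamma|^{2}}\,\omega_{FS}^{m}\;=\;c\,\delta_{\alpha\beta}
\end{equation*}
for some constant $c>0$; moreover if $X_k$ is Chow polystable and its stabilizer in $\mathrm{SL}(N_k+1,\mathbb{C})$ is finite, then $X_k$ is Chow \emph{stable}. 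Since $\mathrm{Aut}(V,L)$ is discrete, the stabilizer of $X_k$ is finite for every $k$, so it suffices to establish Chow polystability of $X_k$, equivalently the existence of a balanced metric in $c_1(L^{k})$, for all $k\gg 0$.

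To find balanced metrics I would run a perturbation argument starting from the given cscK metric $\omega_\infty\in c_1(L)$. Fix a Hermitian metric $h$ on $L$ with curvature $\omega_\infty$. Any Hermitian inner product on $H^0(V,L^{k})$ yields a projective embedding and an induced Fubini--Study metric, and the composite assignment sends a small Kähler potential $\varphi$ (perturbing $\omega_\infty$) to the Bergman density function $\rho_k(\varphi)=\sum_\alpha |s_\alpha|^{2}$, where $\{s_\alpha\}$ is orthonormal for the $L^{2}$ inner product induced by $(h^{k}e^{-k\varphi},\,\omega_\varphi^{m})$; the embedding is balanced precisely when $\rho_k(\varphi)$ is constant on $V$. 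The Tian--Yau--Zelditch--Catlin--Lu expansion gives, in $C^{\infty}$,
\begin{equation*}
\rho_k(\omega_\infty)\;=\;k^{m}+\tfrac12\,S(\omega_\infty)\,k^{m-1}+O(k^{m-2}),
\end{equation*}
so the constancy of $S(\omega_\infty)$ forces the first nontrivial term of $\rho_k(\omega_\infty)$ to be constant; equivalently $\omega_\infty$ is ``balanced to second order''. One then solves $\rho_k\equiv\mathrm{const}$ by the inverse function theorem, correcting $\omega_\infty$ by a potential $\varphi_k=O(k^{-2})$, and sets $\omega_k:=\tfrac1k\,\Phi_{|L^{k}|}^{*}\omega_{FS}$.

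The linearization of $\varphi\mapsto\rho_k(\omega_\varphi)$ at $\omega_\infty$ is, to leading order, a positive multiple of $k^{m-2}$ times the Lichnerowicz operator $\mathcal{D}^{*}\mathcal{D}$ of $\omega_\infty$ acting on functions, whose kernel consists of the functions generating holomorphic vector fields. Here the hypothesis that $\mathrm{Aut}(V,L)$ is discrete enters decisively: it says $\mathcal{D}^{*}\mathcal{D}$ has trivial kernel apart from constants (and constants are accounted for by the normalization $\int_V\rho_k\,\omega_\varphi^{m}=N_k+1$), hence is invertible on the relevant function space. The main obstacle — the technical heart of the argument — is to make this invertibility \emph{uniform in $k$}: one must bound the inverse of the linearized balancing operator with the correct power of $k$, and control the quadratic remainder on balls of the right radius, so that a quantitative inverse function theorem applies simultaneously for all large $k$. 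This requires the full Bergman kernel expansion together with Schauder or Sobolev estimates in suitably $k$-weighted norms. Granting this, for every $k\gg 0$ the variety $X_k$ admits a balanced metric, hence is Chow polystable, hence — by finiteness of its stabilizer — Chow stable; by definition $(V,L)$ is therefore asymptotically Chow stable, which proves the theorem (and yields, as a byproduct, $\omega_k\to\omega_\infty$ in $C^{\infty}$).
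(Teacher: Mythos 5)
The paper does not prove this statement at all: it is Donaldson's theorem, quoted verbatim from \cite{donaldson01} as background, so there is no internal proof to compare against. Your outline is precisely Donaldson's original argument --- cscK plus the Tian--Yau--Zelditch expansion gives metrics that are balanced to leading order, discreteness of $\mathrm{Aut}(V,L)$ makes the Lichnerowicz operator (the linearization of the balancing condition) invertible, a $k$-uniform perturbation argument then produces genuinely balanced embeddings for all large $k$, and Zhang--Luo's correspondence between balanced metrics and Chow (poly)stability, together with finiteness of the stabilizer, yields asymptotic Chow stability. The outline is correct, but be aware that what you label the ``technical heart'' (uniform-in-$k$ invertibility and control of the nonlinear remainder, which in Donaldson's paper is handled via approximate solutions to high order in $k$ and moment-map estimates rather than a bare inverse function theorem) is exactly where all the work lies, so as written this is a faithful sketch of the known proof rather than a complete one.
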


The purpose of this paper is to show an example of asymptotically Chow
unstable polarized manifolds with cscK metrics in the case where
$\mathrm{Aut}(V,L)$ is not discrete.  To state our result more
precisely, let us recall the definition of asymptotic Chow stability
and some related results.  Since $L$ is ample, $V$ can be embedded
into the projective space $\mathbb{P}(W):=\mathbb{P}(H^0(V,L^k)^*)$
for sufficiently large $k$ as an algebraic variety $\Psi_{L^k}(V)$.
For $\Psi_{L^k}(V)$, there corresponds to a point
$[\mathrm{Ch}(\Psi_{L^k}(V))]$ in
$\mathbb{P}[\mathrm{Sym}^d(W)^{\otimes(m+1)}]$, which is often called
the Chow point ({\textit{cf.}} see \cite{mum-fog-kir01} for the full detail).
Take an element $\mathrm{Ch}(\Psi_{L^k}(V))$ representing the Chow
point $[\mathrm{Ch}(\Psi_{L^k}(V))]$.  The action of the special
linear group $\mathrm{SL}(W,\mathbb{C})$ on $W$ is extended to the
action on $\mathrm{Sym}^d(W)^{\otimes(m+1)}$.  We call $\Psi_{L^k}(V)$
Chow stable if and only if the orbit
$\mathrm{SL}(W,\mathbb{C})\cdot\mathrm{Ch}(\Psi_{L^k}(V))$ is closed
and its stabilizer is finite.  We call it Chow semistable if and only
if the closure of the orbit does not contain the origin.  Also we call
$(V,L)$ asymptotically Chow (semi-)stable if and only if
$\Psi_{L^k}(V)$ is Chow (semi-)stable for all sufficiently large $k$.
In this paper, we say that $(V, L)$ is asymptotically Chow unstable if
$(V,L)$ is not asymptotically Chow semistable.  
Theorem~\ref{thm:donaldson} is extended by Mabuchi \cite{mabuchi05} to the
case where $\mathrm{Aut}(V, L)$ is not discrete.

\begin{theorem}[Mabuchi]\label{thm:mabuchi}
  Let $(V, L)$ be a polarized manifold.  Assume that the obstruction
  introduced in \cite{mabuchi04} vanishes.  If $(V,L)$ has cscK
  metrics in $c_1(L)$, $(V, L)$ is asymptotically Chow polystable in
  the sense of \cite{mabuchi05}.
\end{theorem}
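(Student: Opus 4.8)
The plan is to follow Donaldson's method and reduce asymptotic Chow polystability to the existence, for all $k\gg 0$, of a \emph{balanced} --- or, as it will turn out, \emph{polybalanced} --- embedding of $V$ by sections of $L^k$, obtaining such an embedding by perturbing the given cscK metric $\omega\in c_1(L)$; the new feature forced by $\mathrm{Aut}(V,L)$ being non-discrete is a cokernel in the relevant linearization, so the whole construction must be carried out equivariantly.

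First I would invoke the correspondence between Chow polystability and balanced metrics (Zhang, Luo, Phong--Sturm): $\Psi_{L^k}(V)$ is Chow polystable precisely when $H^0(V,L^k)$ admits a Hermitian inner product making the center of mass of the induced embedding a scalar multiple of the identity, which is the vanishing of the moment map $\mu_k$ for the $\mathrm{SU}(H^0(V,L^k))$-action on the space of bases. Then I would fix a maximal compact subgroup $G\subset\mathrm{Aut}(V,L)$ leaving $\omega$ invariant --- available by Matsushima--Lichnerowicz once a cscK metric exists --- and do everything on the $G$-invariant slice, so that $G$-invariant data are produced at every stage.

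Next, starting from the Hermitian metric $h$ on $L$ with curvature $\omega$, I would take the Fubini--Study pull-back $\omega_k$ associated to an $L^{2}(\omega,h^{k})$-orthonormal basis of $H^0(V,L^k)$. The Bergman kernel expansion \cite{tian97} gives $\rho_k(\omega)=k^m+\tfrac12 S(\omega)k^{m-1}+O(k^{m-2})$, so constancy of $S(\omega)$ makes the embedding balanced to top order, and one corrects it inductively to be balanced modulo $O(k^{-N})$ for every $N$. Each correction step amounts to inverting a Lichnerowicz-type fourth-order operator whose cokernel on the $G$-invariant slice is, by Hodge theory and the Matsushima decomposition, the space of holomorphy potentials of holomorphic vector fields in the center of $\mathfrak g$, i.e.\ a fixed torus $\mathfrak t$. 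The component of the $k$-th order error in this cokernel is, by equivariant Riemann--Roch, a Futaki-type integral invariant $\mathcal F_1,\dots,\mathcal F_m$ evaluated on $\mathfrak t$; the classical one $\mathcal F_1$ vanishes because $\omega$ has constant scalar curvature, and the vanishing of the higher ones $\mathcal F_2,\dots,\mathcal F_m$ on $\mathfrak t$ is exactly the hypothesis that the obstruction of \cite{mabuchi04} vanish. Under that hypothesis each correction step is solvable once ``balanced'' is relaxed to ``polybalanced'' --- center of mass equal to the identity plus an element of $\mathfrak t$ --- with the torus directions treated as free parameters so that the operator becomes surjective.

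Finally I would run a quantitative inverse function theorem (equivalently, a Newton iteration) on the finite-dimensional space of bases, with all constants controlled uniformly in $k$, to upgrade the order-$N$ approximate polybalanced metric to an exact one for all large $k$; since the stabilizer of a polybalanced Chow point is reductive and its $\mathrm{SL}$-orbit closed, this gives asymptotic Chow polystability in the sense of \cite{mabuchi05}. The hard part will be precisely this last step and the uniform estimates it needs: proving a $k$-independent lower bound, after the correct rescaling, for the Lichnerowicz operator on the orthogonal complement of its (now explicitly identified) kernel --- the analogue of Donaldson's spectral gap for $\bar\partial^{*}\bar\partial$ --- and matching the cokernel exactly with $\mathfrak t$, so that the vanishing of Mabuchi's higher Futaki invariants is exactly what lets the iteration close.
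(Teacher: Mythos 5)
The paper does not prove this statement at all: it is quoted as Mabuchi's theorem, with the proof residing in the cited work \cite{mabuchi05} (together with the obstruction of \cite{mabuchi04}), so there is no in-paper argument to measure you against. Judged on its own, your outline does reproduce the strategy that is actually used in the literature --- Donaldson's balanced-embedding/moment-map scheme, carried out equivariantly with respect to a maximal compact subgroup of $\mathrm{Aut}(V,L)$, with ``balanced'' relaxed to Mabuchi's ``polybalanced'' condition and with the higher Futaki-type integral invariants (equivalently, the vanishing of Mabuchi's obstruction, cf.\ Proposition 4.1 of \cite{futaki04}) appearing as exactly the cokernel obstructions in the iterative correction.

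However, as a proof it has genuine gaps, which you yourself flag: (i) the $k$-independent lower bound for the linearized (Lichnerowicz-type) operator transverse to the identified kernel, and the exact matching of the cokernel at every order of the expansion with the fixed torus $\mathfrak t$ of holomorphy potentials, are asserted rather than established --- this is the analytic heart of Donaldson's argument and is precisely where the non-discrete automorphism group makes the naive iteration fail; (ii) the final step, that an exact polybalanced embedding yields Chow polystability ``in the sense of \cite{mabuchi05}'', is not automatic: polybalanced means the center of mass equals the identity plus an endomorphism coming from the automorphisms, and closedness of the $\mathrm{SL}(W,\mathbb C)$-orbit then requires Mabuchi's relative (torus-equivariant) GIT analysis, not the standard Kempf--Ness statement you invoke for honestly balanced points; and (iii) the identification of the $k$-th order cokernel components with the invariants whose vanishing is the hypothesis needs an actual computation (an equivariant Riemann--Roch / weight expansion), not just the phrase ``Futaki-type integral invariant''. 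So the proposal is a correct road map of the known proof, but the decisive estimates and identifications that make it close are missing.
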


The notion of polystability in the above is defined by that the orbit
of $\Psi_{L^k}(V)$ with respect to the action of $\mathrm{SL}(W,
\mathbb{C})$ is closed.  So polystability implies semistability.  The
obstruction in the above is defined in \cite{mabuchi04} as a necessary
condition for $(V, L)$ to be asymptotically Chow semistable.  This
obstruction is reformulated by Futaki \cite{futaki04} in more general
form by generalizing so-called Futaki invariant.  The original Futaki
invariant \cite{futaki83} is a map $f:\mathfrak{h}(V)\to \mathbb{C}$
defined by
\[
 f(X):=\int_V Xh_\omega \omega^m,
\]
where $\mathfrak{h}(V)$ is the Lie algebra of all holomorphic vector
fields on $V$, $\omega$ is a K\"ahler form and $h_\omega$ is a
real-valued function defined by
\[
s(\omega)-\biggl(\int_V s(\omega) \omega^m \biggl/ \int_V \omega^m \biggr)
	=-\Delta_\omega h_\omega
\]
up to addition of a constant.  Here $s(\omega)$ denotes the scalar
curvature of $\omega$, $(g^{i\bar{j}})_{i\bar{j}}$ denotes the inverse
of $(g_{i\bar{j}})_{i\bar{j}}$, and
$\Delta_\omega:=-g^{i\bar{j}}\partial_i\bar{\partial}_j$ denotes the
complex Laplacian with respect to $\omega$.  It is well-known that $f$
is independent of the choice of $\omega$ and that the vanishing of $f$
is an obstruction for the existence of cscK metrics in the K\"ahler
class $[\omega]$.

Now let us recall the definition of Futaki's obstruction for
asymptotic Chow semistability.  Let $\mathfrak{h}_0(V)$ be the Lie
subalgebra of $\mathfrak{h}(V)$ consisting of holomorphic vector
fields which have non-empty zero set.  For any $X \in {\mathfrak
  h}_0(V)$, there exists a complex valued smooth function $u_X$ such
that
\[
i(X) \omega = - \barpartial u_X,
\]
\begin{equation}\label{eq:normalization}
\int_V u_X\,\omega^m = 0.
\end{equation}
Let $\theta$ be a type $(1,0)$ connection of the holomorphic tangent
bundle $T'V$.  Let $\Theta := \bar\partial\theta$, which is the
curvature form with respect to $\theta$.  For $X\in \mathfrak{h}(V)$,
let $ L(X) := \nabla_X - \mathcal{L}_X$, where $\nabla_X$ and
$\mathcal{L}_X$ are the covariant derivative by $X$ with respect to
$\theta$ and the Lie derivative respectively.  Remark that $L(X)$ can
be regarded as a smooth section of $\mathrm{End}(T'V)$ the
endomorphism bundle of the holomorphic tangent bundle.  Let $\phi$ be
a $GL(m,\bfC)$-invariant polynomial of degree $p$ on ${\mathfrak
  gl}(m, \bfC)$.  We define ${\mathcal F}_{\phi} : \mathfrak h_0(V)
\to \bfC$ by
\begin{align}\label{eq:generalized_futaki_inv}
{\mathcal F}_{\phi}(X) 
= & \, (m-p+1) \int_V \phi(\Theta) \wedge u_X\,\omega^{m-p}
\\ 
\nonumber
& 
+ \int_V \phi(L(X) + \Theta) \wedge \omega^{m-p+1}.
\end{align}
It is proved that ${\mathcal F}_{\phi}(X)$ is independent of the
choices of $\omega$ and $\theta$ (see \cite{futaki04}).  Let
$\mathrm{Td}^p$ be the $p$-th Todd polynomial which is a
$GL(m,\bfC)$-invariant polynomial of degree $p$ on ${\mathfrak gl}(m,
\bfC)$.  Then it is proved \cite{futaki04}

\begin{theorem}[Futaki]\label{thm:futaki04}
  If $(V, L)$ is asymptotically Chow semistable, then, for any
  $p=1,\cdots, m$, $\mathcal{F}_{\mathrm{Td}^p}(X)=0$ for $X$ in a
  maximal reductive subalgebra $\mathfrak{h}_r(V)$ of
  $\mathfrak{h}_0(V)$.
\end{theorem}

In particular $\mathcal{F}_{\mathrm{Td}^1}$ is equal to
$f\mid_{\mathfrak{h}_0(V)}$ up to multiplication of a constant.  The
vanishing of $\mathcal{F}_{\mathrm{Td}^p}$ for all $p$ is equivalent
to the vanishing of Mabuchi's obstruction ({\textit{cf.}} Proposition 4.1 in
\cite{futaki04}).

\begin{remark}
  It might be noticed among experts that the main result in
  \cite{futaki04} derives a stronger statement than 
Theorem~\ref{thm:futaki04}.  It says that the vanishing of
  $\mathcal{F}_{\mathrm{Td}^p}$ for all $p$ follows from Chow
  semistability of $(V, L^{k_i})$ for some sequence $\{k_i\}_i$ of
  integers (not necessarily asymptotic Chow semistability).  In fact,
  it is proved in \cite{futaki04} that Chow semistability of $(V,L^k)$
  implies the equation (4.2) in \cite{futaki04} for a given $k$.  The
  invariants $\mathcal{F}_{\mathrm{Td}^p}$ correspond to the
  coefficients of the polynomial in $k$ of degree $m+1$ in the right
  hand of (4.2) in \cite{futaki04}.  Hence, the vanishing of the
  coefficients is implied by the vanishing of the polynomial not
  necessarily for all $k$ greater than some positive integer $k_0$ but
  just for finitely many $k$.  Related to this remark, a necessary
  condition for Chow semistability of polarized toric manifolds is
  studied by the first author \cite{ono_JMSJ}.
\end{remark}

In \cite{futaki-ono-sano0811}, Futaki and the first and second authors
investigated the linear dependence among
$\{\mathcal{F}_{\mathrm{Td}^p}\}_p$ and proposed the following
question.

\begin{problem}\label{problem:futaki-ono-sano}
  Does the existence of cscK metrics induce the vanishing of
  $\mathcal{F}_{\mathrm{Td}^p}$ for all $p$?
\end{problem}

For $p=1$, the existence of cscK metrics of course implies the
vanishing of $\mathcal{F}_{\mathrm{Td}^1}$.  If the answer were
affirmative, the assumption of Theorem~\ref{thm:mabuchi} could be
omitted and Theorem~\ref{thm:donaldson} could be extended to the case
where $\mathrm{Aut}(V,L)$ is not discrete.  Note that this extension
is also discussed in Conjecture~1 in \cite{arezzo-loi04}.

Moreover, it was claimed in \cite{futaki-ono-sano0811} that if a
counterexample to Problem~\ref{problem:futaki-ono-sano} exists among
toric Fano manifolds with anticanonical polarization, it should be a
non-symmetric toric Fano manifold with K\"ahler-Einstein metrics in
the sense of Batyrev-Selivanova \cite{batyrev-selivanova99}.  At the
time when \cite{futaki-ono-sano0811} was written, the existence of
such toric Fano manifolds was not known.  However it is discovered by
Nill-Paffenholz \cite{nill-paffenholz090505} very recently.  The main
result of this paper is to show that one of the toric Fano manifolds
in \cite{nill-paffenholz090505} is the desired example in
\cite{futaki-ono-sano0811}.  That is to say,

\begin{theorem}\label{thm:main}
  There exists a seven dimensional toric Fano manifold $V$ with
  K\"ahler-Einstein metrics in $c_1(V):=c_1(K^{-1}_V)$, whose
  $\mathcal{F}_{\mathrm{Td}^p}$ does not vanish for $2\le p \le 7$.
  In particular, $(V,K_V^{-k})$ is not Chow semistable for all $k$
  greater than some positive integer $k_0$.
\end{theorem}

Also Theorem \ref{thm:main} implies that the assumption about
obstruction in Theorem~\ref{thm:mabuchi} can not be omitted.  Hence,
this means that our example in Theorem~\ref{thm:main} is also a
counterexample to Conjecture~1 in \cite{arezzo-loi04}.

We shall prove Theorem \ref{thm:main} by the following two ways; the
direct calculation by the localization formula 
(Section~\ref{sec:localization}), and the derivation of the Hilbert series
(Section~\ref{sec:hilbert}).  In particular, our method implies the
existence of K\"ahler-Einstein metrics on $V$ independently of
\cite{nill-paffenholz090505}.  Remark that on Fano manifolds, all cscK
metrics in $c_1(V)$ are equal to K\"ahler-Einstein metrics.

\section{The Nill-Paffenholz's example}\label{sec:NP}

See \cite{nill-paffenholz090505} for notations and terminologies of
toric geometry in this section.

First of all, let us recall toric Fano manifolds briefly.  A toric
variety $V$ is an algebraic normal variety with an effective
holomorphic action of $T_{\mathbb{C}}:=(\mathbb{C}^\times)^m$, where
$\dim_\mathbb{C}V=m$.  Let $T_\mathbb{R}:=(S^1)^m$ be the real torus
in $T_{\mathbb{C}}$ and $\mathfrak{t}_{\mathbb{R}}$ be the associated
Lie algebra.  Let $N_\mathbb{R}:=J\mathfrak{t}_\mathbb{R}\simeq
\mathbb{R}^m$ where $J$ is the complex structure of
$T_{\mathbb{C}}$. Let $M_\mathbb{R}$ be the dual space
$Hom(N_\mathbb{R}, \mathbb{R})\simeq\mathbb{R}^m$ of
$N_\mathbb{R}$. Denoting the group of algebraic characters of
$T_{\mathbb{C}}$ by $M$, then $M_\mathbb{R}=M\otimes_\mathbb{Z}
\mathbb{R}$.  It is well-known that $m$-dimensional compact toric
manifolds correspond to nonsingular complete fans in
$\mathbb{R}^m$. Moreover when $V$ is an $m$-dimensional toric Fano
manifold, the corresponding fan $\Sigma_V\subset N_{\mathbb{R}}\simeq
\mathbb{R}^m$ satisfies the following properties: Let $N\subset
N_{\mathbb{R}}$ be the dual lattice of $M$,
$$G_V=\{\sigma\in N \, |\, \mathbb{R}_{>0}\cdot \sigma \in \Sigma_V
\text{ and }\sigma \text{ is primitive}\}$$
and $Q_V$ be the convex hull of $G_V$ in $\mathbb{R}^m$. Then
\begin{enumerate}
\item[(a)] the set of vertices of $Q_V$ is equal  to $G_V$,
\item[(b)] the origin of $N_{\mathbb{R}}$ is contained in the
  interior of $Q_V$,
\item[(c)] any face of $Q_V$ is a simplex, and
\item[(d)] the set of vertices of any facet of $Q_V$ constitutes a basis
  of $N\simeq \mathbb{Z}^m\subset N_{\mathbb{R}}$.
\end{enumerate}
An integral polytope satisfying the conditions (b), (c) and (d) is
often called a Fano polytope. Conversely, if an $m$-dimensional Fano
polytope $Q\subset \mathbb{R}^m$ is given, then
$$\Sigma(Q):=\{0\}\cup \{c(F)\}_{F\text{: face of }Q}$$
is a nonsingular complete fan in $\mathbb{R}^m$. Here $c(F)=\mathbb{R}_{\ge 0}
\cdot F\subset \mathbb{R}^m$ is the cone over $F$. Hence there is the
$m$-dimensional toric Fano manifold $V$ associated with the fan $\Sigma(Q)$.
By the construction above, $Q_V=Q$.

Let $V$ be the seven dimensional toric Fano manifold whose vertices of
Fano polytope $Q_V$ in $N_\mathbb{R}\simeq \mathbb{R}^7$ are given by
\begin{align}
\nonumber
&
\\
\nonumber
&
(\mathbf{v}_1 \,\, \mathbf{v}_2 \,\, \mathbf{v}_3 \,\, 
\mathbf{v}_4 \,\, \mathbf{v}_5 \,\, \mathbf{v}_6 \,\, 
\mathbf{v}_7 \,\, \mathbf{v}_8 \,\, \mathbf{v}_9 \,\, 
\mathbf{v}_{10} \,\, \mathbf{v}_{11} \,\, \mathbf{v}_{12})
\\
\label{eq:vertices_fano_polytope}
&=
\left(\begin{array}{rrrrrrrrrrrr}
1 & 0 & 0 & 0 & 0 & -1 & 0 & 0 & 0 & 0 & 0 & 0
\\
0 & 1 & 0 & 0 & -1 & 0 & 0 & 0 & 0 & 0 & 0 & 0
\\
0 & 0 & 1 & -1 & 0 & 0 & 0 & 0 & 0 & 0 & 0 & 0
\\
0 & 0 & 0 & 0 & 0 & 0 & 1 & 0 & 0 & -1 & 0 & 0
\\
0 & 0 & 0 & 0 & 0 & 0 & 0 & 1 & 0 & -1 & 0 & 0
\\
0 & 0 & 0 & 0 & 0 & 0 & 0 & 0 & 1 & -1 & 0 & 0
\\
0 & 0 & 0 & -1 & -1 & -1 & 0 & 0 & 0 & 2 & 1 & -1
\end{array}\right).
\end{align}
Remark that $V$ is isomorphic to a $\mathbb{P}^1$-bundle over
$(\mathbb{P}^1)^3\times \mathbb{P}^3$.  To see this, let
$\Tilde{Q}\subset \mathbb{R}^6$ be the Fano polytope whose vertices
are
\begin{align*}
(\Tilde{\mathbf{v}}_1 \,\, &\Tilde{\mathbf{v}}_2 \,\, \Tilde{\mathbf{v}}_3 
\,\, \Tilde{\mathbf{v}}_4 \,\, \Tilde{\mathbf{v}}_5 \,\, \Tilde{\mathbf{v}}_6
\,\, \Tilde{\mathbf{v}}_7 \,\, \Tilde{\mathbf{v}}_8 \,\, \Tilde{\mathbf{v}}_9
\,\, \Tilde{\mathbf{v}}_{10})\\
&=
\left(\begin{array}{rrrrrrrrrr}
1 & 0 & 0 & 0 & 0 & -1 & 0 & 0 & 0 & 0
\\
0 & 1 & 0 & 0 & -1 & 0 & 0 & 0 & 0 & 0
\\
0 & 0 & 1 & -1 & 0 & 0 & 0 & 0 & 0 & 0
\\
0 & 0 & 0 & 0 & 0 & 0 & 1 & 0 & 0 & -1
\\
0 & 0 & 0 & 0 & 0 & 0 & 0 & 1 & 0 & -1
\\
0 & 0 & 0 & 0 & 0 & 0 & 0 & 0 & 1 & -1
\end{array}\right).
\end{align*}
It is easy to see that the $6$-dimensional toric Fano manifold
associated to $\Tilde{Q}$ is $(\mathbb{P}^1)^3\times \mathbb{P}^3$.
The projection $\pi:\mathbb{Z}^7\to \mathbb{Z}^6,\ \pi(x_1,\dots,x_7)
=(x_1,\dots,x_6)$ is a map of fans from $(\mathbb{Z}^7,\Sigma(Q_V))$
to $(\mathbb{Z}^6,\Sigma(\Tilde{Q}))$.  Hence we have an equivariant
morphism $p:V\to (\mathbb{P}^1)^3\times \mathbb{P}^3$ associated to
$\pi$. We can apply Proposition $1.33$ of \cite{o} to the map of fans
$\pi$. As a result, $p$ is a $\mathbb{P}^1$- fibration on
$(\mathbb{P}^1)^3\times \mathbb{P}^3$.

\begin{theorem}[Nill-Paffenholz]\label{thm:Nill-Paffenholz}
  The toric Fano manifold $V$ defined by
  (\ref{eq:vertices_fano_polytope}) is not symmetric, but its Futaki
  invariant vanishes.  In particular $V$ admits
  $T_\mathbb{R}$-invariant K\"ahler-Einstein metrics.
\end{theorem}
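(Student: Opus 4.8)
\emph{Proof proposal.} The statement splits into two independent parts; the existence of K\"ahler--Einstein metrics will then follow from the vanishing of the Futaki invariant via the criterion of Wang and Zhu for toric Fano manifolds.

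\emph{Step 1: $V$ is not symmetric.} In the sense of Batyrev--Selivanova \cite{batyrev-selivanova99}, $V$ is symmetric exactly when the finite group $G:=\mathrm{Aut}(N,\Sigma_V)$ of lattice automorphisms preserving the fan has $N_\mathbb{R}^{G}=\{0\}$. Since $G$ permutes the $1$-dimensional cones of $\Sigma_V$, it permutes their primitive generators $\mathbf{v}_1,\dots,\mathbf{v}_{12}$ and hence fixes the vector $\mathbf{v}_1+\cdots+\mathbf{v}_{12}$. Summing the columns of (\ref{eq:vertices_fano_polytope}) gives $\mathbf{v}_1+\cdots+\mathbf{v}_{12}=(0,0,0,0,0,0,-1)\neq 0$, so $N_\mathbb{R}^{G}\neq\{0\}$ and $V$ is not symmetric. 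The asymmetry lives entirely in the seventh --- that is, the $\mathbb{P}^1$-fibre --- coordinate, and is caused by the entry $2$ in the column $\mathbf{v}_{10}$.

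\emph{Step 2: reducing the Futaki vanishing to a barycentre.} Let $P\subset M_\mathbb{R}$ be the moment polytope of the anticanonically polarised pair $(V,K_V^{-1})$. Because the $\mathbf{v}_i$ are precisely the primitive ray generators of $\Sigma_V$, $P$ is the polar dual polytope $Q^{*}=\{y\in M_\mathbb{R}:\langle y,\mathbf{v}_i\rangle\geq -1,\ i=1,\dots,12\}$, a reflexive polytope whose normal fan is $\Sigma_V$. By Mabuchi's computation of the Futaki invariant of a toric Fano manifold, the restriction of $f$ to $\mathfrak{t}_\mathbb{C}$ is, up to a nonzero multiplicative constant, the linear functional $\xi\mapsto\int_{P}\langle y,\xi\rangle\,dy=\mathrm{vol}(P)\,\langle\,\mathrm{bar}(P),\xi\,\rangle$, where $\mathrm{bar}(P)=\tfrac{1}{\mathrm{vol}(P)}\int_{P}y\,dy$. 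So the whole matter reduces to showing $\mathrm{bar}(P)=0$.

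\emph{Step 3: computing the barycentre.} I would compute $\mathrm{bar}(Q^{*})$ directly from (\ref{eq:vertices_fano_polytope}). By (c)--(d) the facets of the Fano polytope $Q$ are $7$-element simplices whose vertices form lattice bases of $N$, and they correspond bijectively to the maximal cones of $\Sigma_V$; since $V$ is a $\mathbb{P}^1$-bundle over $(\mathbb{P}^1)^3\times\mathbb{P}^3$, these cones come in pairs over the $2^3\cdot 4=32$ maximal cones of the base, so they are easy to list. Each facet $\{\mathbf{v}_{i_1},\dots,\mathbf{v}_{i_7}\}$ of $Q$ gives one vertex $y$ of $Q^{*}$, namely the unique solution of $\langle y,\mathbf{v}_{i_k}\rangle=-1$, $k=1,\dots,7$, and running over all facets produces all vertices of $Q^{*}$. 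One then picks any triangulation of $Q^{*}$ --- for instance by coning from the origin over a triangulation of each facet --- and evaluates $\int_{Q^{*}}y\,dy=\sum_{\Delta}\mathrm{vol}(\Delta)\,\mathrm{bar}(\Delta)$, using that the barycentre of a $7$-simplex is the mean of its eight vertices, and checks that the sum is $0$. The symmetries of $Q$ that survive the bundle twisting --- permutations and sign changes of the three $\mathbb{P}^1$-factors together with the $S_4$ on the $\mathbb{P}^3$-factor --- make the first six coordinates of $\mathrm{bar}(Q^{*})$ vanish for free, so only the seventh coordinate genuinely has to be computed.

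\emph{Conclusion and the main obstacle.} Once $\mathrm{bar}(Q^{*})=0$ is verified, $f|_{\mathfrak{t}_\mathbb{C}}=0$ by Step 2; moreover $\mathrm{bar}(Q^{*})=0$ is exactly Wang and Zhu's criterion for a toric Fano manifold to carry a $T_\mathbb{R}$-invariant K\"ahler--Einstein metric, so such a metric exists on $V$, and then $f$ vanishes identically on $\mathfrak{h}(V)$ since it is an obstruction to the existence of K\"ahler--Einstein metrics. That proves the theorem. The only real difficulty is the bookkeeping in Step 3: correctly enumerating all maximal cones of $\Sigma_V$ and carrying out the volume-weighted vertex sums over $Q^{*}$ without error. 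It is precisely this computation that Sections \ref{sec:localization} and \ref{sec:hilbert} carry out by two different methods --- the localization formula and the Hilbert series --- which also supply the input needed for the nonvanishing of $\mathcal{F}_{\mathrm{Td}^p}$, $p\geq 2$, in Theorem \ref{thm:main}; further details of the face structure of $Q$ are in \cite{nill-paffenholz090505}.
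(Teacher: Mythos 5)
Your overall strategy is sound, and your Step 1 is in fact a complete, self-contained argument for non-symmetry that the paper itself does not supply (the paper quotes Nill--Paffenholz for this point and only exhibits six permutations in $\mathcal{W}(V)$ to bound $\dim N_\mathbb{R}^{\mathcal{W}(V)}$ above by one): granting the standard identification of $\mathcal{W}(V)$ with the lattice automorphisms preserving the fan, any such automorphism permutes the primitive ray generators and hence fixes $\mathbf{v}_1+\cdots+\mathbf{v}_{12}=(0,0,0,0,0,0,-1)\neq 0$, so $N_\mathbb{R}^{\mathcal{W}(V)}\neq\{0\}$. The reduction in Step 2 is also correct and standard: on a toric Fano manifold $f|_{\mathfrak{t}_\mathbb{C}}$ is, up to a nonzero constant, $\xi\mapsto\int_P\langle y,\xi\rangle\,dy$, the condition $\mathrm{bar}(P)=0$ is exactly the Wang--Zhu criterion for a $T_\mathbb{R}$-invariant K\"ahler--Einstein metric, and K\"ahler--Einstein existence then forces $f\equiv 0$ on all of $\mathfrak{h}(V)$.

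The genuine gap is Step 3: the entire nontrivial content of the theorem is that the seventh coordinate of $\mathrm{bar}(P)$ actually vanishes, and you never verify it --- you only describe how one would (list the $64$ facets of $Q$, solve for the $64$ vertices of $P$, triangulate, and sum). As you yourself note, the surviving symmetries only kill the first six coordinates; the seventh involves a delicate cancellation caused precisely by the entry $2$ in $\mathbf{v}_{10}$, so ``checks that the sum is $0$'' cannot be waved through --- without that computation the statement is only reduced, not proved. Your closing attribution is also off: Sections \ref{sec:localization} and \ref{sec:hilbert} compute the higher obstructions $\mathcal{F}_{\mathrm{Td}^p}$, $2\le p\le 7$, and show they are \emph{nonzero}; the only corroboration of $f|_{\mathfrak{t}}=0$ in the paper is the $p=1$ localization sum over the $64$ fixed points in Section \ref{sec:localization}, while Theorem \ref{thm:Nill-Paffenholz} itself is simply quoted from \cite{nill-paffenholz090505}, with only the K\"ahler--Einstein clause deduced from Wang--Zhu. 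To turn your proposal into a proof you must actually carry out the barycentre computation (or an equivalent localization or Hilbert-series check) for the seventh coordinate.
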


The second statement in Theorem \ref{thm:Nill-Paffenholz} follows from
the fact proved by Wang-Zhu \cite{wang-zhu01}, which says that a toric
Fano manifold admits K\"ahler-Einstein metrics if and only if its
Futaki invariant vanishes.  Here we shall explain about the symmetry
of toric Fano manifolds in Theorem~\ref{thm:Nill-Paffenholz}.  Let
$\mbox{Aut}(V)$ be the group of automorphisms of $V$.  Let
$\mathcal{W}(V)$ be the Weyl group of $\mbox{Aut}(V)$ with respect to
the maximal torus and $N_\mathbb{R}^{\mathcal{W}(V)}$ be the
$\mathcal{W}(V)$-invariant subspace of $N_\mathbb{R}$.  Batyrev and
Selivanova \cite{batyrev-selivanova99} say that a toric Fano manifold
$V$ is symmetric if and only if $\dim
N_\mathbb{R}^{\mathcal{W}(V)}=0$.

Then, let us consider the symmetry of $V$ defined by
(\ref{eq:vertices_fano_polytope}).  $\mathcal{W}(V)$ contains two
cyclic groups acting on $(\mathbb{P}^1)^3$ and $\mathbb{P}^3$
respectively, \textit{i.e.}, one acts on $(x_1, x_2, x_3)$ and the other acts
on $(x_4, x_5, x_6)$ where $(x_1,x_2,x_3,x_4,x_5,x_6,x_7)$ $\in
N_{\mathbb{R}} \simeq \mathbb{R}^7$.  Hence, we find that the
dimension of $N_\mathbb{R}^{\mathcal{W}(V)}$ is at most one.  However,
since $V$ is not symmetric, $\dim N_\mathbb{R}^{\mathcal{W}(V)}=1$.

Next, we shall consider affine toric varieties in $V$ and the
associated $7$-dimensional cones.  As explained above, we find that in
(\ref{eq:vertices_fano_polytope}), the first six vertices
$\{\mathbf{v}_1, \ldots, \mathbf{v}_6\}$ give affine toric varieties
in $(\mathbb{P}^1)^3$, the next four vertices $\{\mathbf{v}_7, \ldots,
\mathbf{v}_{10}\}$ give them in $\mathbb{P}^3$, and the last two
vertices $\{\mathbf{v}_{11}, \mathbf{v}_{12}\}$ give them in the
$\mathbb{P}^1$-fibre.  More precisely, the set of vertices of each
facet of the Fano polytope defined by
(\ref{eq:vertices_fano_polytope}) consists one of $\{\mathbf{v}_1,
\mathbf{v}_6\}$, one of $\{\mathbf{v}_2, \mathbf{v}_5\}$, one of
$\{\mathbf{v}_3, \mathbf{v}_4\}$, three of $\{\mathbf{v}_7,\ldots,
\mathbf{v}_{10}\}$ and one of $\{\mathbf{v}_{11}, \mathbf{v}_{12}\}$.
Hence, the toric Fano manifold $V$ is covered by $64$ affine toric
varieties, which are isomorphic to $\mathbb{C}^7$ as listed 
in Table~\ref{tab:coordinate}.
\begin{table}
\begin{center}
\begin{tabular}{|c|l|}
  \hline
cone & toric affine variety $\simeq \mathbb{C}^7$
\\
\hline
$\{\mathbf{ v}_1,\mathbf{ v}_2, \mathbf{ v}_3, \mathbf{ v}_7, 
	\mathbf{ v}_8, \mathbf{ v}_9, \mathbf{ v}_{11}\}$
&
$\mbox{Spec}(\mathbb{C}[X_1,X_2,X_3,Y_1,Y_2,Y_3,Z])$
\\
\hline
$\{\mathbf{ v}_6,\mathbf{ v}_2, \mathbf{ v}_3, \mathbf{ v}_7, 
\mathbf{ v}_8, \mathbf{ v}_9, \mathbf{ v}_{11}\}$
&
$\mbox{Spec}(\mathbb{C}[X_1^{-1},X_2,X_3,Y_1,Y_2,Y_3,ZX_1^{-1}])$
\\
\hline
$\{\mathbf{ v}_1,\mathbf{ v}_2, \mathbf{ v}_3, \mathbf{ v}_7, 
\mathbf{ v}_8, \mathbf{ v}_{10}, \mathbf{ v}_{11}\}$
&
$\mbox{Spec}(\mathbb{C}[X_1,X_2,X_3,
Y_1Y_3^{-1},Y_2Y_3^{-1},Y_3^{-1},ZY_3^{2}])$
\\
\hline
$\{\mathbf{ v}_1,\mathbf{ v}_2, \mathbf{ v}_3, \mathbf{ v}_7, 
\mathbf{ v}_8, \mathbf{ v}_9, \mathbf{ v}_{12}\}$
&
$\mbox{Spec}(\mathbb{C}[X_1,X_2,X_3,Y_1,Y_2,Y_3,Z^{-1}])$
\\
\hline
$\{\mathbf{ v}_6,\mathbf{ v}_5, \mathbf{ v}_3, \mathbf{ v}_7, 
\mathbf{ v}_8, \mathbf{ v}_9, \mathbf{ v}_{11}\}$
&
$\mbox{Spec}(\mathbb{C}[X_1^{-1},X_2^{-1},X_3,Y_1,Y_2,Y_3,
ZX_1^{-1}X_2^{-1}])$
\\
\hline
$\{\mathbf{ v}_6,\mathbf{ v}_2, \mathbf{ v}_3, \mathbf{ v}_8, 
\mathbf{ v}_9, \mathbf{ v}_{10}, \mathbf{ v}_{11}\}$
&
$\mbox{Spec}(\mathbb{C}[X_1^{-1},X_2,X_3,
Y_1^{-1},Y_2Y_1^{-1},Y_3Y_1^{-1},ZX_1^{-1}Y_1^2])$
\\
\hline
$\{\mathbf{ v}_6,\mathbf{ v}_2, \mathbf{ v}_3, \mathbf{ v}_7, 
\mathbf{ v}_8, \mathbf{ v}_9, \mathbf{ v}_{12}\}$
&
$\mbox{Spec}(\mathbb{C}[X_1^{-1},X_2,X_3,Y_1,Y_2,Y_3,Z^{-1}X_1])$
\\
\hline
$\{\mathbf{ v}_1,\mathbf{ v}_2, \mathbf{ v}_3, \mathbf{ v}_8, 
\mathbf{ v}_9, \mathbf{ v}_{10}, \mathbf{ v}_{12}\}$
&
$\mbox{Spec}(\mathbb{C}[X_1,X_2,X_3,
Y_1^{-1},Y_2Y_1^{-1},Y_3Y_1^{-1},Z^{-1}Y_1^{-2})$
\\
\hline
$\{\mathbf{ v}_4,\mathbf{ v}_5, \mathbf{ v}_6, \mathbf{ v}_7, 
\mathbf{ v}_8, \mathbf{ v}_9, \mathbf{ v}_{11}\}$
&
$\mbox{Spec}(\mathbb{C}[X_1^{-1},X_2^{-1},X_3^{-1},Y_1,Y_2,Y_3,
ZX_1^{-1}X_2^{-1}X_3^{-1}])$
\\
\hline
$\{\mathbf{ v}_6,\mathbf{ v}_5, \mathbf{ v}_3, \mathbf{ v}_8, 
\mathbf{ v}_9, \mathbf{ v}_{10}, \mathbf{ v}_{11}\}$
&
$\mbox{Spec}(\mathbb{C}[X_1^{-1},X_2^{-1},X_3,
Y_1^{-1},Y_2Y_1^{-1},Y_3Y_1^{-1},$
\\
&	
$ZX_1^{-1}X_2^{-1}Y_1^2])$
\\
\hline
$\{\mathbf{ v}_6,\mathbf{ v}_5, \mathbf{ v}_3, \mathbf{ v}_7, 
\mathbf{ v}_8, \mathbf{ v}_9, \mathbf{ v}_{12}\}$
&
$\mbox{Spec}(\mathbb{C}[X_1^{-1},X_2^{-1},X_3,
Y_1,Y_2,Y_3,Z^{-1}X_1X_2])$
\\
\hline
$\{\mathbf{ v}_6,\mathbf{ v}_2, \mathbf{ v}_3, \mathbf{ v}_8, 
\mathbf{ v}_9, \mathbf{ v}_{10}, \mathbf{ v}_{12}\}$
&
$\mbox{Spec}(\mathbb{C}[X_1^{-1},X_2,X_3,
Y_1^{-1},Y_2Y_1^{-1},Y_3Y_1^{-1},Z^{-1}X_1Y_1^{-2}])$
\\
\hline
$\{\mathbf{ v}_6,\mathbf{ v}_5, \mathbf{ v}_4, \mathbf{ v}_8, 
\mathbf{ v}_9, \mathbf{ v}_{10}, \mathbf{ v}_{11}\}$
&
$\mbox{Spec}(\mathbb{C}[X_1^{-1},X_2^{-1},X_3^{-1},
Y_1^{-1},Y_2Y_1^{-1},Y_3Y_1^{-1},$
\\
&
$ZX_1^{-1}X_2^{-1}X_3^{-1}Y_1^2])$
\\
\hline
$\{\mathbf{ v}_6,\mathbf{ v}_5, \mathbf{ v}_4, \mathbf{ v}_7, 
\mathbf{ v}_8, \mathbf{ v}_9, \mathbf{ v}_{12}\}$
&
$\mbox{Spec}(\mathbb{C}[X_1^{-1},X_2^{-1},X_3^{-1},
Y_1,Y_2,Y_3, Z^{-1}X_1X_2X_3])$
\\
\hline
$\{\mathbf{ v}_6,\mathbf{ v}_5, \mathbf{ v}_3, \mathbf{ v}_8, 
\mathbf{ v}_9, \mathbf{ v}_{10}, \mathbf{ v}_{12}\}$
&
$\mbox{Spec}(\mathbb{C}[X_1^{-1},X_2^{-1},X_3,
Y_1^{-1},Y_2Y_1^{-1},Y_3Y_1^{-1},$
\\
&
$Z^{-1}X_1X_2Y_1^{-2}])$
\\
\hline
$\{\mathbf{ v}_6,\mathbf{ v}_5, \mathbf{ v}_4, \mathbf{ v}_8, 
\mathbf{ v}_9, \mathbf{ v}_{10}, \mathbf{ v}_{12}\}$
&
$\mbox{Spec}(\mathbb{C}[X_1^{-1},X_2^{-1},X_3^{-1},
Y_1^{-1},Y_2Y_1^{-1},Y_3Y_1^{-1},$
\\
&
$Z^{-1}X_1X_2X_3Y_1^{-2}])$
\\
\hline
\end{tabular}
\end{center}
\caption{}
\label{tab:coordinate}
\end{table}
The other affine toric varieties unlisted in 
Table~\ref{tab:coordinate} can be obtained easily by the symmetry of $V$.

\section{Direct computation of
  $\mathcal{F}_{\mathrm{Td}^p}$}\label{sec:localization}

First, we shall make the family $\{\mathcal{F}_{\mathrm{Td}^p}\}_p$
simpler in the case of the anticanonical polarization.  For a K\"ahler
form $\omega\in c_1(V)$, let $g$ be the associated K\"ahler metric.
We have the Levi-Civita connection $\theta=g^{-1}\partial g$ and its
curvature form $\Theta=\bar{\partial}\theta$.  Then, for the
associated covariant derivative $\nabla$ with $\theta$, $L(X)$ can be
expressed by
\[
L(X)=\nabla X=\nabla_j X^i dz^j\otimes \frac{\partial}{\partial z^i}
\]
where $X\in \mathfrak{h}(V)$, because $\nabla$ is torsion free.  Now
assume that $(V, K^{-1}_V)$ is a Fano manifold with anticanonical
polarization.  By the Calabi-Yau theorem \cite{yau77}, for a K\"ahler
form $\omega\in c_1(V)$ there exists another K\"ahler form $\eta\in
c_1(V)$ whose Ricci form $\rho_\eta$ is equal to $\omega$.  For
$X\in\mathfrak{h}_0(V)$ let $\tilde{u}_{X}$ be the Hamiltonian
function with respect to $\omega$ and a different normalization 
from~(\ref{eq:normalization})
\[
\int_V \tilde{u}_X \omega^m =-f(X).
\]
Recall that $\tilde{u}_X=\Delta_\eta \tilde{u}_X$, where $\Delta_\eta$
is the Laplacian of $\eta$.  Let
\begin{align*}
{\mathcal G}_{\mathrm{Td}^p}(X) 
:= &\, (m-p+1) \int_V \mathrm{Td}^p(\Theta_\eta) \wedge \tilde{u}_X \rho_\eta^{m-p}
\\ 
&  
+ \int_V \mathrm{Td}^p(L_\eta(X) + \Theta_\eta) \wedge \rho_\eta^{m-p+1}.
\end{align*}
Here $\Theta_\eta$ is the curvature form of the Levi-Civita connection
$\theta_\eta$ with respect to $\eta$ and $L_\eta(X)$ is also
associated with $\theta_\eta$.  The proof of Theorem 3.2 in
\cite{futaki-ono-sano0811} implies that the difference between
$\mathcal{F}_{\mathrm{Td}^p}$ and $\mathcal{G}_{\mathrm{Td}^p}$ is
equal to a constant multiple of $\mathcal{F}_{\mathrm{Td}^1}$ for any
$p$.

\begin{lemma}\label{lem:localization}
Let $V$ be a Fano manifold with K\"ahler-Einstein metrics.
Then,
\begin{equation}\label{eq:td_reduction}
\mathcal{F}_{\mathrm{Td}^{p}}(X)
=
\int_V (\mathrm{Td}^{p}\cdot c_1^{m-p+1})(L_\eta(X)+\Theta_\eta)
\end{equation}
where $X\in \mathfrak{h}_0(V)$.
\end{lemma}

\begin{proof}
  Since $V$ admits K\"ahler-Einstein metrics and
  $\mathcal{F}_{\mathrm{Td}^1}$ is proportional to the original Futaki
  invariant $f$, $\mathcal{F}_{\mathrm{Td}^1}$ vanishes.  So
  $\mathcal{F}_{\mathrm{Td}^p}$ is equal to
  $\mathcal{G}_{\mathrm{Td}^p}$.  Hence, we find
\begin{align*}
\mathcal{F}_{\mathrm{Td}^{p}}(X)
& =  
(m-p+1)
\int_V \mathrm{Td}^{p}(\Theta_\eta)\wedge (\Delta_\eta u_X) \rho_\eta^{m-p}
\\
&
\quad +\int_V \mathrm{Td}^{p}(L_\eta(X)+\Theta_\eta)\wedge \rho_\eta^{m-p+1}
\\
&=
(m-p+1)\int_V \mathrm{Td}^{p}(\Theta_\eta)
\wedge c_1(L_\eta(X)) c_1(\Theta_\eta)^{m-p}
\\
&
\quad +\int_V \mathrm{Td}^{p}(L_\eta(X)+\Theta_\eta)\wedge c_1(\Theta_\eta)^{m-p+1}
\\
&=
\int_V \mathrm{Td}^{p}(L_\eta(X)+\Theta_\eta)
\wedge 
\\
&	
\quad \{(m-p+1)c_1(L_\eta(X)) c_1(\Theta_\eta)^{m-p}
+ c_1(\Theta_\eta)^{m-p+1}\}
\\
&=
\int_V (\mathrm{Td}^{p}\cdot c_1^{m-p+1})(L_\eta(X)+\Theta_\eta).
\end{align*}
\end{proof}

Since the right hand of (\ref{eq:td_reduction}) is a kind of the
integral invariants in \cite{futaki-morita85}, we can apply the
localization formula in \cite{futaki-morita85} for
$\mathcal{F}_{\mathrm{Td}^p}$ as follows.  Assume that $X$ has only
isolated zeroes $\{p_i\}$ and that $L(X)_{p_i}$ is non-degenerate at
each $p_i$, \textit{i.e.},
\[
\det(L(X)_{p_i})=\det 
\biggl(\frac{\partial X^k}{\partial z^l}(p_i) \biggr)_{1\le k,l \le m} \neq 0,
\]
where $(z^1, \cdots, z^m)$ are local coordinates.
Then we have
\begin{equation}\label{eq:td_reduction2}
\mathcal{F}_{\mathrm{Td}^p}(X)
=
\sum_{p_i}
\frac{(\mathrm{Td}^p\cdot c_1^{m-p+1}) (L(X)_{p_i})}{\det (L(X)_{p_i})}.
\end{equation}
As for the localization formula, see also \cite{futaki88}.

We consider the following one-parameter subgroup $\{\sigma_t\}$ in the
maximal torus of $\mathrm{Aut}(V)$; it is written by
\begin{align*}
&\sigma_t\cdot(X_1,X_2,X_3,
	Y_1, Y_2, Y_3,
	Z ) 
	\\
&= 
	(e^{a_1t}X_1,e^{a_2t}X_2,e^{a_3t}X_3,
	e^{b_1t}Y_1, e^{b_2t}Y_2, e^{b_3t}Y_3,
	e^{ct}Z)
\end{align*}
in the affine variety
$\mbox{Spec}(\mathbb{C}[X_1,X_2,X_3,Y_1,Y_2,Y_3,Z])$, which
corresponds to the $7$-dimensional cone generated by $\{\mathbf{
  v}_1,\mathbf{ v}_2, \mathbf{ v}_3, \mathbf{ v}_7, \mathbf{ v}_8,
\mathbf{ v}_9, \mathbf{ v}_{11}\}$.  Here, $(X_1,X_2,X_3)$ are affine
coordinates of $(\mathbb{P}^1)^3$, $(Y_1,Y_2,Y_3)$ are affine
coordinates of $\mathbb{P}^3$, and $Z$ is an affine coordinate of the
fibre.  Hence, we have
\[
	X_1=\frac{x_0}{x_1},
	X_2=\frac{x_2}{x_3},
	X_3=\frac{x_4}{x_5},
	Y_1=\frac{y_0}{y_3},
	Y_2=\frac{y_1}{y_3},
	Y_3=\frac{y_2}{y_3},
\]
where
\[
(
[x_0: x_1], [x_2: x_3], [x_4: x_5],
[y_0: y_1: y_2: y_3]
)
\]
are homogeneous coordinates of $(\mathbb{P}^1)^3\times \mathbb{P}^3$.
Let us see $\sigma_t$ in terms of another affine coordinates by using
the coordinate transformations (see Table~\ref{tab:coordinate}).  For
generic $\{a_i, b_j, c\}_{1\le i,j \le 3}$, the set of fixed points of
$\sigma_t$ consists of the following isolated $64$ points;
\begin{equation*}
\{
(\mathbf{x}_1, \mathbf{x}_2, \mathbf{x}_3, \mathbf{y}, \mathbf{z}) \in V
\mid
\mathbf{x}_i, \, \mathbf{z}=\mathbf{ p}_- \,\,\mbox{or }\mathbf{ p}_+, \,\,\,
\mathbf{y}= \mathbf{ p}_j  \,\, (j=1,2,3,4)
\},
\end{equation*}
where $\mathbf{ p}_-$ denotes $[1:0]$, $\mathbf{ p}_+$ denotes $[0:1]$
and $\mathbf{ p}_1=[1:0:0:0]$, $\mathbf{ p}_2=[0:1:0:0]$, $\mathbf{
  p}_3=[0:0:1:0]$, $\mathbf{ p}_4=[0:0:0:1]$.

Next we shall calculate $L(X)$ at each fixed point of $\sigma_t$.  For
example, let us consider $L(X)$ at
\[
(\mathbf{ p}_+, \mathbf{ p}_+, \mathbf{ p}_+, \mathbf{ p}_4, \mathbf{ p}_+)
=([0:1], [0:1], [0:1], [0:0:0:1], [0:1]).
\]
This point is the origin in the affine variety
$\mathrm{Spec}(\mathbb{C}[X_1, X_2, X_3, Y_1, \!Y_2, \!Y_3, Z])$
associated with the $7$-dimensional cone generated by 
$\{\mathbf{v}_1, \!\!\mathbf{ v}_2, \!\!\mathbf{ v}_3, \!\!\mathbf{ v}_7, 
\!\!\mathbf{ v}_8,\! \mathbf{ v}_{9}, \!\mathbf{ v}_{11}\!\}.$  
The holomorphic vector field
with respect to $\sigma_t$ around the point is expressed by
\[
\sum_{i=1}^{3} a_iX_i\frac{\partial}{\partial X_i}
+\sum_{j=1}^{3}b_iY_j\frac{\partial}{\partial Y_j}
+cZ\frac{\partial}{\partial Z}.
\]
Hence $L(X)$ at $(\mathbf{ p}_+, \mathbf{ p}_+, \mathbf{ p}_+,
\mathbf{ p}_4, \mathbf{ p}_+)$ is given by
\[
L(X)=
\mathrm{diag} (a_1, a_2, a_3, b_1, b_2, b_3, c).
\]
For another example, let us consider $L(X)$ at 
\[
(\mathbf{ p}_-, \mathbf{ p}_+, \mathbf{ p}_+, \mathbf{ p}_1, \mathbf{ p}_+)
=([1:0], [0:1], [0:1], [1:0:0:0], [0:1]).
\]
This point is the origin in 
$ \mathrm{Spec} (\mathbb{C}[X_1^{-1}\!,\! X_2,X_3, 
\!Y_1^{-1}\!,\! Y_2Y_1^{-1}\!,\! Y_3Y_1^{-1}\!,\! ZX_1^{-1}Y_1^{2}]) $
associated with the $7$-dimensional cone generated by 
$\{\!\mathbf{v}_6, \!\!\mathbf{ v}_2, \!\!\mathbf{ v}_3, 
\!\!\mathbf{ v}_8, \!\!\mathbf{ v}_9,
\!\!\mathbf{ v}_{10}, \!\!\mathbf{ v}_{11}\!\}.$  
The holomorphic vector field
with respect to $\sigma_t$ around the point is expressed by
\begin{multline*}
-a_1U_1\frac{\partial}{\partial U_1}
+
\sum_{i=2}^{3} a_iU_i\frac{\partial}{\partial U_i}
-b_1U_4\frac{\partial}{\partial U_4}
+\sum_{j=2}^3(b_{j}-b_1)U_{3+j}\frac{\partial}{\partial U_{3+j}}
+(c-a_1+2b_1)U_7\frac{\partial}{\partial U_7},
\end{multline*}
where
\begin{multline*}
U_1:=X_1^{-1}, U_2:=X_2, U_3:=X_3, 
U_4:=Y_1^{-1}, U_5:=Y_2Y_1^{-1}, U_6:=Y_3Y_1^{-1}, 
U_7:=ZX_1^{-1}Y_1^{2}.
\end{multline*}
Hence $L(X)$ at $(\mathbf{ p}_-, \mathbf{ p}_+, \mathbf{ p}_+,
\mathbf{ p}_1, \mathbf{ p}_+)$ is given by
\[
L(X)=
\mathrm{diag} (-a_1, a_2, a_3, -b_1, b_2-b_1, b_3-b_1, c-a_1+2b_1).
\]
As for the other fixed points, the computations of $L(X)$ are given by the 
Table \ref{tab:action};
\begin{table}
{\small
\begin{center}
\begin{tabular}{|c|c|l|}
	\hline
		no. & fixed pt & $L(X)$
	\\
	\hline
			1-1&
		$(+++, \mathbf{ p}_1,\pm)$ &
		$(
			a_1, a_2, a_3, 
			b_2-b_1, b_3-b_1, -b_1, \pm(c+2b_1)
		)$
	\\
	\hline
			1-2&
		$(-++, \mathbf{ p}_1,\pm)$ &
		$(
			-a_1, a_2, a_3, 
			b_2-b_1, b_3-b_1, -b_1, \pm(c-a_1+2b_1)
		)$
	\\
	\hline
			1-3&
		$(+-+, \mathbf{ p}_1,\pm)$ &
		$(
			a_1, -a_2, a_3, 
			b_2-b_1, b_3-b_1, -b_1, \pm(c-a_2+2b_1)
		)$
	\\
	\hline
			1-4&
		$(++-, \mathbf{ p}_1,\pm)$ &
		$(
			a_1, a_2, -a_3, 
			b_2-b_1, b_3-b_1, -b_1, \pm(c-a_3+2b_1)
		)$
	\\
	\hline
			1-5&
		$(+--, \mathbf{ p}_1,\pm)$ &
		$(
			a_1, \!-a_2, \!-a_3, 
			b_2-b_1, b_3-b_1, \!-b_1, \pm(c-a_2-a_3+2b_1)
		)$
	\\
	\hline
			1-6&
		$(-+-, \mathbf{ p}_1,\pm)$ &
		$(
			-a_1, a_2, \!-a_3, 
			b_2\!-b_1, b_3\!-b_1, \!-b_1, \pm(c\!-a_1\!-a_3\!+2b_1)
		)$
	\\
	\hline
			1-7&
		$(--+, \mathbf{ p}_1,\pm)$ &
		$(
			-a_1, \!-a_2, a_3, 
			b_2\!-b_1, b_3\!-b_1, \!-b_1, \pm(c-a_1-a_2+2b_1)
		)$
	\\
	\hline
			1-8&
		$(---, \mathbf{ p}_1,\pm)$ &
		$(
			-a_1, \!-a_2, \!-a_3, 
			b_2\!-b_1, b_3\!-b_1, \!-b_1, 
                        \pm\big(c\!-\sum a_i+2b_1)
		\big)$
	\\
	\hline
			2-1&
		$(+++, \mathbf{ p}_2,\pm)$ &
		$(
			a_1, a_2, a_3, 
			b_1-b_2, b_3-b_2, -b_2, \pm(c+2b_2)
		)$
	\\
	\hline
			2-2&
		$(-++, \mathbf{ p}_2,\pm)$ &
		$(
			-a_1, a_2, a_3, 
			b_1-b_2, b_3-b_2, -b_2, \pm(c-a_1+2b_2)
		)$
	\\
	\hline
			2-3&
		$(+-+, \mathbf{ p}_2,\pm)$ &
		$(
			a_1, -a_2, a_3, 
			b_1-b_2, b_3-b_2, -b_2, \pm(c-a_2+2b_2)
		)$
	\\
	\hline
			2-4&
		$(++-, \mathbf{ p}_2,\pm)$ &
		$(
			a_1, a_2, -a_3, 
			b_1-b_2, b_3-b_2, -b_2, \pm(c-a_3+2b_2)
		)$
	\\
	\hline
			2-5&
		$(+--, \mathbf{ p}_2,\pm)$ &
		$(
			a_1, \!-a_2, \!-a_3, 
			b_1\!-b_2, b_3\!-b_2, \!-b_2, \pm(c-a_2-a_3+2b_2)
		)$
	\\
	\hline
			2-6&
		$(-+-, \mathbf{ p}_2,\pm)$ &
		$(
			-a_1, a_2, \!-a_3, 
			b_1\!-b_2, b_3\!-b_2, \!-b_2, \pm(c-a_1-a_3+2b_2)
		)$
	\\
	\hline
			2-7&
		$(--+, \mathbf{ p}_2,\pm)$ &
		$(
			-a_1, \!-a_2, a_3, 
			b_1\!-b_2, b_3\!-b_2, \!-b_2, \pm(c-a_1-a_2+2b_2)
		)$
	\\
	\hline
			2-8&
		$(---, \mathbf{ p}_2,\pm)$ &
		$(
			-a_1, \!-a_2, \!-a_3, 
			b_1\!-b_2, b_3\!-b_2, \!-b_2, 
                        \pm\big(c-\sum a_i+2b_2)
		\big)$	
	\\
	\hline
			3-1&
		$(+++, \mathbf{ p}_3,\pm)$ &
		$(
			a_1, a_2, a_3, 
			b_1-b_3, b_2-b_3, -b_3, \pm(c+2b_3)
		)$
	\\
	\hline
			3-2&
		$(-++, \mathbf{ p}_3,\pm)$ &
		$(
			-a_1, a_2, a_3, 
			b_1-b_3, b_2-b_3, -b_3, \pm(c-a_1+2b_3)
		)$
	\\
	\hline
			3-3&
		$(+-+, \mathbf{ p}_3,\pm)$ &
		$(
			a_1, -a_2, a_3, 
			b_1-b_3, b_2-b_3, -b_3, \pm(c-a_2+2b_3)
		)$
	\\
	\hline
			3-4&
		$(++-, \mathbf{ p}_3,\pm)$ &
		$(
			a_1, a_2, -a_3, 
			b_1-b_3, b_2-b_3, -b_3, \pm(c-a_3+2b_3)
		)$
	\\
	\hline
			3-5&
		$(+--, \mathbf{ p}_3,\pm)$ &
		$(
			a_1, \!-a_2, \!-a_3, 
			b_1\!-b_3, b_2\!-b_3, \!-b_3, \pm(c-a_2-a_3+2b_3)
		)$
	\\
	\hline
			3-6&
		$(-+-, \mathbf{ p}_3,\pm)$ &
		$(
			-a_1, a_2, \!-a_3, 
			b_1\!-b_3, b_2\!-b_3, \!-b_3, \pm(c-a_1-a_3+2b_3)
		)$
	\\
	\hline
			3-7&
		$(--+, \mathbf{ p}_3,\pm)$ &
		$(
			-a_1, \!-a_2, a_3, 
			b_1\!-b_3, b_2\!-b_3, \!-b_3, \pm(c-a_1-a_2+2b_3)
		)$
	\\
	\hline
			3-8&
		$(---, \mathbf{ p}_3,\pm)$ &
		$(
			-a_1, \!-a_2, \!-a_3, 
			b_1\!-b_3, b_2\!-b_3, \!-b_3, 
                        \pm\big(c-\sum a_i+2b_3)
		\big)$	
	\\
	\hline
			4-1&
		$(+++, \mathbf{ p}_4,\pm)$ &
		$(
			a_1, a_2, a_3, 
			b_1, b_2, b_3, \pm c
		)$
	\\
	\hline
			4-2&
		$(-++, \mathbf{ p}_4,\pm)$ &
		$(
			-a_1, a_2, a_3, 
			b_1, b_2, b_3, \pm(c-a_1)
		)$
	\\
	\hline
			4-3&
		$(+-+, \mathbf{ p}_4,\pm)$ &
		$(
			a_1, -a_2, a_3, 
			b_1, b_2, b_3, \pm(c-a_2)
		)$
	\\
	\hline
			4-4&
		$(++-, \mathbf{ p}_4,\pm)$ &
		$(
			a_1, a_2, -a_3, 
			b_1, b_2, b_3, \pm(c-a_3))
		)$
	\\
	\hline
			4-5&
		$(+--, \mathbf{ p}_4,\pm)$ &
		$(
			a_1, -a_2, -a_3, 
			b_1, b_2, b_3, \pm(c-a_2-a_3)
		)$
	\\
	\hline
			4-6&
		$(-+-, \mathbf{ p}_4,\pm)$ &
		$(
			-a_1, a_2, -a_3, 
			b_1, b_2, b_3, \pm(c-a_1-a_3)
		)$
	\\
	\hline
			4-7&
		$(--+, \mathbf{ p}_4,\pm)$ &
		$(
			-a_1, -a_2, a_3, 
			b_1, b_2, b_3, \pm(c-a_1-a_2)
		)$
	\\
	\hline
			4-8&
		$(---, \mathbf{ p}_4,\pm)$ &
		$(
			-a_1, -a_2, -a_3, 
			b_1, b_2, b_3, \pm\big(c-\sum a_i)
		\big)$	
	\\
	\hline
\end{tabular}
\end{center}
\caption{}
\label{tab:action}
}
\end{table}
As for the notation of the column of fixed points, for example, $(+++,
\mathbf{ p}_1,-)$ means a fixed point $(\mathbf{ p}_+,\mathbf{
  p}_+,\mathbf{ p}_+,\mathbf{ p}_1,\mathbf{
  p}_-)=([0:1],[0:1],[0:1],[1:0:0:0],[1:0])$.  Remark that $L(X)$ is
non-degenerate at any fixed point for generic $\{a_i, b_j,
c\}_{i,j=1,2,3}$.

Finally, we shall list below the results of calculations of
$\mathcal{F}_{\mathrm{Td}^{p}}$ ($2\le p \le 7$) with respect to the
holomorphic vector field induced by $\{\sigma_t\}$ for generic $\{a_i,
b_j, c\}_{i,j=1,2,3}$.  As for $p=1$, it suffices to consider $f$
instead of $\mathcal{F}_{\mathrm{Td}^1}$.  We have the localization
formula for $f$ independently of Lemma~\ref{lem:localization}
(\textit{cf.}~\cite{futaki88}).  The formula for $f$ is same as
(\ref{eq:td_reduction2}), but it holds without assuming the existence
of K\"ahler-Einstein metrics.  By using it (not Lemma
\ref{lem:localization}) we can prove that $f$ vanishes on
$\mathfrak{h}_0(V)$.  See Appendix for the calucation.  Combining this
and \cite{wang-zhu01}, we can prove that $V$ admits K\"ahler-Einstein
metrics independently of \cite{nill-paffenholz090505}.  Then, we apply
Lemma~\ref{lem:localization} to $\mathcal{F}_{\mathrm{Td}^p}$ ($p\ge
2$).  Since the computations are quite enormous, we use the computer
algebra system ``Maxima''.\footnote{Maxima is available from
  http://maxima.sourceforge.net/.}  However, in order to see that $V$
is a counterexample to Problem \ref{problem:futaki-ono-sano}, it is
sufficient to check that $\mathcal{F}_{\mathrm{Td}^p}$ does not vanish
for some $\{a_i, b_j, c\}$ and some $2 \le p \le 7$.  It is still
tough, but it would be able to check without computer.  For the
readers convenience, we put all the data needed to compute in the case
where $(a_1, a_2, a_3, b_1, b_2, b_3, c)=(1,1,1,1,2,3,4)$ and $p=2$ in
Appendix.

\begin{description}
\item[$p=2$]
\begin{align}
		12\mathcal{F}_{\mathrm{Td}^{2}}(X)
	&=
		\sum_{\mathbf{q}: \,\,\mathrm{fixed} \,\mathrm{pt}}
		\frac{(c_1^2+c_2)c_1^6(L(X)_{\mathbf{q}})}{\det(L(X)_{\mathbf{q}})}
\nonumber
	\\
	&=
		\sum_{\mathbf{q}: \,\,\mathrm{fixed} \,\mathrm{pt}}
		\frac{(c_2c_1^6)(L(X)_{\mathbf{q}})}{\det(L(X)_{\mathbf{q}})}
\nonumber
	\\
	\label{eq:check_coefficient_1}
	&=
		13056\Big(\sum a_i- \sum b_i -2c\Big).
\end{align}
\item[$p=3$]
\begin{align*}
		24\mathcal{F}_{\mathrm{Td}^{3}}(X)
	&=
		\sum_{\mathbf{q}: \,\,\mathrm{fixed} \,\mathrm{pt}}
		\frac{c_2c_1^6(L(X)_{\mathbf{q}})}{\det(L(X)_{\mathbf{q}})}
	\\
	&=
			12\mathcal{F}_{\mathrm{Td}^{2}}(X)	
	\\
	&=
		13056\Big(\sum a_i- \sum b_i -2c\Big).
\end{align*}
\item[$p=4$]
\begin{align*}
		720\mathcal{F}_{\mathrm{Td}^{4}}(X)
	&=
		\sum_{\mathbf{q}: \,\,\mathrm{fixed} \,\mathrm{pt}}
		\frac{(-c_1^4+4c_1^2c_2+3c_2^2+c_1c_3-c_4)
			c_1^4(L(X)_{\mathbf{q}})}{\det(L(X)_{\mathbf{q}})}
	\\
	&=
		94080\Big(\sum a_i- \sum b_i -2c\Big).
\end{align*}
\item[$p=5$]
\begin{align*}
		1440\mathcal{F}_{\mathrm{Td}^{5}}(X)
	&=
		\sum_{\mathbf{q}: \,\,\mathrm{fixed} \,\mathrm{pt}}
		\frac{(-c_1^3c_2+3c_1c_2^2+c_1^2c_3-c_1c_4)
			c_1^3(L(X)_{\mathbf{q}})}{\det(L(X)_{\mathbf{q}})}
	\\
	&=
		28800\Big(\sum a_i- \sum b_i -2c\Big).
\end{align*} 
\item[$p=6$]
\begin{align*}
		60480\mathcal{F}_{\mathrm{Td}^{6}}(X)
	&=
		\sum_{\mathbf{q}: \,\,\mathrm{fixed} \,\mathrm{pt}}
		\biggl(\frac{(2c_1^6-12c_1^4c_2+11c_1^2c_2^2+10c_3^2+5c_1^3c_3
		)
			c_1^2(L(X)_{\mathbf{q}})}{\det(L(X)_{\mathbf{q}})}
	\\
	&
	\quad	+\frac{(11c_1c_2c_3-c_3^2-5c_1^2c_4-9c_2c_4-2c_1c_5+2c_6)
			c_1^2(L(X)_{\mathbf{q}})}{\det(L(X)_{\mathbf{q}})}\biggr)
	\\
	&=
		82176\Big(\sum a_i- \sum b_i -2c\Big).
\end{align*}

\item[$p=7$]
\begin{align*}
		120960\mathcal{F}_{\mathrm{Td}^{7}}(X)
	&=
		\sum_{\mathbf{q}: \,\,\mathrm{fixed} \,\mathrm{pt}}
		\biggl(\frac{(11c_1^2c_2c_3-9c_1c_2c_4+2c_1c_6-2c_1^2c_5
		)
			c_1(L(X)_{\mathbf{q}})}{\det(L(X)_{\mathbf{q}})}
	\\
	&
	\quad	
    +\frac{(2c_1^3c_4-c_1c_3^2-2c_1^4c_3+10c_1c_2^3-10c_1^3c_2^2+2c_1^5c_2)
			c_1(L(X)_{\mathbf{q}})}{\det(L(X)_{\mathbf{q}})}\biggr)
	\\
	&=
		16128\Big(\sum a_i- \sum b_i -2c\Big).
\end{align*}
\end{description}
Remark that all $\mathcal{F}_{\mathrm{Td}^{p}}$ ($2\le p \le7$) are
proportional to each other.  This result is consistent with the fact
that $\dim N_\mathbb{R}^{\mathcal{W}(V)}=1$.  Therefore we can
conclude that even if a Fano manifold admits K\"ahler-Einstein metrics
(\textit{i.e.}, cscK metrics),
$\{\mathcal{F}_{\mathrm{Td}^{p}}\}_{p=1,\ldots,m}$ may not vanish.
The proof of the main theorem is completed.

\section{The derivatives of the Hilbert series}\label{sec:hilbert}

In \cite{futaki-ono-sano0811}, Futaki and the first two authors showed
a relation between the obstructions to asymptotic Chow semistability
and the derivatives of the Hilbert series.  In the present section, we
will see that we can also show Theorem \ref{thm:main} using such
relation.

We first review the definition and some properties of the Hilbert
series.  See \cite{futaki-ono-sano0811} for more details.  Let $V$ be
a toric Fano $m$-fold and $Q$ be the corresponding Fano polytope.  The
polar dual $P$ of $Q$, which is the Delzant polytope of $(V,K_V^{-1})$
in $M_{\mathbb{R}}\simeq \mathbb{R}^m$, is defined as
$$ P:=\{w\in \mathbb{R}^m\ |\ \langle \mathbf{v}_j , w \rangle\ge -1\}$$       
where $\mathbf{v}_j \in \mathbb{Z}^m$ is a vertex of $Q$ for each $j$.

We call the convex rational polyhedral cone
$$ \mathcal C^{\ast}:=\{y \in \mathbb{R}^{m+1}\ |\ 
\langle\lambda _j , y \rangle\ge 0\}$$ 
the toric diagram of $V$, where
$\lambda_j =(\mathbf{v}_j,1)\in \mathbb{Z}^{m+1}$.  Note here that
this cone is a pointed cone in $\mathbb{R}^{m+1}$, that is to say,
$\mathcal{C}^*\cap (-\mathcal{C}^*)=\{0\}$.  We can also represent
$\mathcal{C}^*$ by
$$\mathcal{C}^*=\{c_1\boldsymbol{\mu}_1+\cdots +
c_k\boldsymbol{\mu}_k\, | \,
c_i \ge 0, \,\, i=1,\cdots, k
\}$$
where ${\boldsymbol{\mu}}_j=(\w_j,1)\in \mathbb{Z}^{m+1}$ and
$\w_1,\cdots,\w_k$
are the vertices of the Delzant polytope $P$.
Then we can define the
(multi-graded) Hilbert series $C({\bf{x},\mathcal C^*})$ of
the rational cone $\mathcal{C}^*$
by
$$
C({\bf{x}},\mathcal C^{\ast})
=\sum_{{\bf{a}}\in \mathcal C^{\ast}\cap \mathbb{Z}^{m+1}}{\bf x}^{\bf a}\ \ \ \ \
({\bf x}^{\bf a}=x_1^{a_1}\cdots x_{m+1}^{a_{m+1}}).
$$
As proved in \cite{miller-sturmfels05}, the Hilbert series
$C({\bf{x}},\mathcal C^*)$ can be written as a rational generating
function of the form
$$C({\bf{x}}, \mathcal C^{\ast})
=\frac{K_{\mathcal C^{\ast}}({\bf{x}})}
{(1-{\bf {x}}^{{\boldsymbol{\mu}}_1})\cdots 
(1-{\bf {x}}^{{\boldsymbol{\mu}}_k})}$$
where $K_{\mathcal C^{\ast}}({\bf{x}})$
is a Laurent polynomial. 
Using Brion's formula, we are able to
calculate the right hand side of the above equation as follows, see
\cite{futaki-ono-sano0811};
$$C({\bf{x}},\mathcal C^{\ast})
=\sum_{j=1}^k\frac{1}{1-{\bf {x}}^{{\boldsymbol{\mu}}_j}}\prod _{b=1}^m
\frac{1}{(1-\Tilde{\bf x}^{{\bf e}_{j,b}})},$$  
where
${\bf{e}}_{j,1},\cdots,{\bf{e}}_{j,m}\in \mathbb{Z}^m$ 
denote the generators of the edges emanating from a vertex 
$\w_j$ and $\Tilde{\bf x}=(x_1,\cdots ,x_m)$.

Let $\mathcal{C}_R$ be the convex polytope defined as
$$\mathcal{C}_R=\{\xi\in \mathcal{C}\, |\,
\xi=({\bf b},m+1)\},$$ 
where $\mathcal{C}$ is the interior of the dual
cone of $\mathcal{C}^*$.  For $\xi=({\bf{b}},m+1)\in \mathcal{C}_R$ we
write
$${\bf{e}}^{-t\xi}=(e^{-b_1t},\cdots,e^{-b_m t},e^{-(m+1)t})$$  
and consider
$$
C({\bf{e}}^{-t\xi},\mathcal C^{\ast})=
\frac{K_{\mathcal C^{\ast}}({\bf{e}}^{-t\xi})}
{(1-e^{-t\langle {\boldsymbol{\mu}}_1,\xi\rangle})\cdots (1-
e^{-t\langle {\boldsymbol{\mu}}_k,\xi\rangle})}.
$$
For each fixed $\xi\in \mathcal{C}_R$, the Laurent expansion of
$C({\bf{e}}^{-t\xi},\mathcal C^{\ast})$ at $t=0$ is written as
$$
C({\bf{e}}^{-t\xi},\mathcal C^{\ast})
=\frac{C_{-m-1}(\bf{b})}{t^{m+1}}+\frac{C_{-m}(\bf{b})}{t^{m}}
+\frac{C_{-m+1}(\bf{b})}{t^{m-1}}+\cdots.$$  

In \cite{futaki-ono-sano0811}, it was shown that the following
relation between the invariants $\mathcal{F}_{\text{\upshape Td}^p}$
and the derivatives of $C_i(\bf{b})$ at $\bf{b}=0$.

\begin{theorem}[\cite{futaki-ono-sano0811}]\label{thm:41}
  The linear span of the derivatives $d_{0}C_i(\bf{b})$,
  $i=-m-1,-m,\dots$, coincides with the linear span of
  $\mathcal{F}_{\text{\upshape Td}^1},\dots
  \mathcal{F}_{\text{\upshape Td}^m}$ restricted to
  $\mathfrak{t}\otimes \mathbb{C}\simeq \mathbb{C}^m$.
\end{theorem}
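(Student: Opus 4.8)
The plan is to compute both families of invariants, restricted to $\mathfrak{t}\otimes\mathbb{C}$, by equivariant localization over the finite set $\{p_j\}$ of $T_{\mathbb{C}}$-fixed points of $V$, and then to read off the equality of their linear spans from an essentially triangular passage between the two sets of formulas. On the side of the generalized Futaki invariants, the Lemma above together with the localization formula of \cite{futaki-morita85} (see also \cite{futaki88}) gives, for the holomorphic vector field $X$ attached to a generic $\mathbf{b}\in\mathfrak{t}\otimes\mathbb{C}$,
\[
\mathcal{F}_{\mathrm{Td}^{p}}(X)=\sum_{j}\frac{(\mathrm{Td}^{p}\cdot c_{1}^{\,m-p+1})(L(X)_{p_j})}{\det(L(X)_{p_j})},
\]
where $L(X)_{p_j}=\mathrm{diag}(\langle\mathbf{e}_{j,1},\mathbf{b}\rangle,\dots,\langle\mathbf{e}_{j,m},\mathbf{b}\rangle)$ and $\mathbf{e}_{j,1},\dots,\mathbf{e}_{j,m}$ are the primitive edge generators of the Delzant polytope $P$ at the vertex $\mathbf{w}_j$ corresponding to $p_j$. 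In particular each $\mathcal{F}_{\mathrm{Td}^{p}}$ is already linear in $\mathbf{b}$, so both sides of the assertion live in $(\mathfrak{t}\otimes\mathbb{C})^{*}$. On the side of the Hilbert series I would substitute $\mathbf{x}=\mathbf{e}^{-t\xi}$ with $\xi=(\mathbf{b},m+1)$ into Brion's formula; because the same edge generators $\mathbf{e}_{j,b}$ appear there, this becomes
\[
C(\mathbf{e}^{-t\xi},\mathcal{C}^{*})=\sum_{j}\frac{1}{1-e^{-t s_{j}}}\prod_{b=1}^{m}\frac{1}{1-e^{-t\langle\mathbf{e}_{j,b},\mathbf{b}\rangle}},\qquad s_{j}=\langle\mathbf{w}_j,\mathbf{b}\rangle+(m+1).
\]

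Next I would feed in the Todd generating identity $\frac{z}{1-e^{-z}}=\sum_{q\ge 0}\tau_{q}z^{q}$ (so $\tau_{0}=1$), under which $\prod_{b}\frac{t\langle\mathbf{e}_{j,b},\mathbf{b}\rangle}{1-e^{-t\langle\mathbf{e}_{j,b},\mathbf{b}\rangle}}=\sum_{p\ge 0}t^{p}\,\mathrm{Td}^{p}(L(X)_{p_j})$; reading off the coefficient of $t^{-m-1+\ell}$ yields, for every $\ell\ge 0$,
\[
C_{-m-1+\ell}(\mathbf{b})=\sum_{q+p=\ell}\tau_{q}\sum_{j}\frac{s_{j}^{\,q-1}\,\mathrm{Td}^{p}(L(X)_{p_j})}{\det(L(X)_{p_j})}.
\]
Here the one genuinely Fano ingredient enters: since $P$ is reflexive one has $\mathbf{w}_j=-(\mathbf{e}_{j,1}+\cdots+\mathbf{e}_{j,m})$, hence $\langle\mathbf{w}_j,\mathbf{b}\rangle=-c_{1}(L(X)_{p_j})$ and $s_{j}=(m+1)-c_{1}(L(X)_{p_j})$. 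Expanding $s_{j}^{\,q-1}$ as a power series in $c_{1}(L(X)_{p_j})$ then rewrites $C_{-m-1+\ell}$ as a combination, with fixed rational coefficients, of the basic sums
\[
h_{r,p}(\mathbf{b}):=\sum_{j}\frac{(\mathrm{Td}^{p}\cdot c_{1}^{\,r})(L(X)_{p_j})}{\det(L(X)_{p_j})},
\]
and equivariant localization shows $h_{r,p}$ vanishes when $p+r<m$, is a topological constant when $p+r=m$, equals $\mathcal{F}_{\mathrm{Td}^{p}}$ when $p+r=m+1$, and is a homogeneous polynomial in $\mathbf{b}$ of degree $p+r-m\ge 2$ when $p+r>m+1$.

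The last step is to differentiate at $\mathbf{b}=0$. Since $d_{0}$ annihilates constants and all homogeneous polynomials of degree $\ge 2$, the only terms of the above combination that survive are those with $r=m-p+1$, for which $d_{0}h_{r,p}=\mathcal{F}_{\mathrm{Td}^{p}}$. Tracking these, one finds $d_{0}C_{-m-1+\ell}=c_{\ell}\,\mathcal{F}_{\mathrm{Td}^{\ell}}$ with $c_{\ell}$ a nonzero rational for $1\le\ell\le m$, $d_{0}C_{-m-1}$ a nonzero multiple of $\mathcal{F}_{\mathrm{Td}^{1}}$ (using $\int_{V}c_{1}^{m+1}(L(X)+\Theta)=2\mathcal{F}_{\mathrm{Td}^{1}}$), and, for $\ell\ge m+1$, only $\mathbb{C}$-linear combinations of $\mathcal{F}_{\mathrm{Td}^{1}},\dots,\mathcal{F}_{\mathrm{Td}^{m+1}}$. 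As $\mathcal{F}_{\mathrm{Td}^{m+1}}=h_{0,m+1}=0$ by the equivariant Riemann--Roch identity $\sum_{j}\prod_{b}\bigl(1-e^{-t\langle\mathbf{e}_{j,b},\mathbf{b}\rangle}\bigr)^{-1}=\chi^{T}(V,\mathcal{O}_{V})=1$, every $d_{0}C_{i}$ lies in $\mathrm{span}\{\mathcal{F}_{\mathrm{Td}^{1}},\dots,\mathcal{F}_{\mathrm{Td}^{m}}\}$, while the first two observations show this span is already achieved by $d_{0}C_{-m},\dots,d_{0}C_{-1}$; hence the two spans coincide.

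I expect the main obstacle to be the toric bookkeeping of the middle step — fixing the signs and the additive constant $m+1$ in $s_{j}$ correctly — and, more substantively, ruling out that a genuinely new invariant appears among the $d_{0}C_{i}$. A priori the only candidates are the $\mathcal{F}_{\mathrm{Td}^{p}}$ with $p>m$, for which the defining expression $\mathrm{Td}^{p}\cdot c_{1}^{\,m-p+1}$ involves a negative power of $c_{1}$ and ceases to make sense; it is exactly the vanishing $\mathcal{F}_{\mathrm{Td}^{m+1}}=0$ forced by $\chi^{T}(V,\mathcal{O}_{V})=1$, together with the degree count that kills everything of order $\ge 2$ in $\mathbf{b}$, that disposes of this.
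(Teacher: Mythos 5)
The paper itself gives no proof of Theorem \ref{thm:41}: it is quoted from \cite{futaki-ono-sano0811}, where it is obtained by identifying the Hilbert series with the generating series of the equivariant characters of $H^0(V,K_V^{-k})$ and applying equivariant Riemann--Roch, so that the Laurent coefficients $C_i(\mathbf{b})$ become equivariant Todd-class integrals whose linear parts in $\mathbf{b}$ are the $\mathcal{F}_{\mathrm{Td}^p}$. Your argument reaches the same conclusion through the fixed-point-localized version of that identity: Brion's formula vertex by vertex, the Todd generating function, the reflexivity identity $\mathbf{w}_j=-(\mathbf{e}_{j,1}+\cdots+\mathbf{e}_{j,m})$ (which is indeed valid for the Delzant polytope of a smooth toric Fano with anticanonical polarization), and Atiyah--Bott/Futaki--Morita localization to recognize each sum $h_{r,p}$ as a polynomial of degree $p+r-m$ in $\mathbf{b}$, whose linear members ($p+r=m+1$) are the integral invariants; the equivariant Lefschetz identity $\chi^T(V,\mathcal{O}_V)=1$ kills $h_{0,m+1}$, and the degree count kills everything else under $d_0$. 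This is correct and complete in outline, and it buys an entirely combinatorial, finite computation over the vertices, whereas the route of \cite{futaki-ono-sano0811} works with the index over $V$ and is what the present paper simply imports. Two small caveats: the Lemma of Section \ref{sec:localization} is stated under the K\"ahler--Einstein hypothesis, so for a general toric Fano the localized sum $\sum_j(\mathrm{Td}^p\cdot c_1^{m-p+1})(L(X)_{p_j})/\det(L(X)_{p_j})$ equals $\mathcal{F}_{\mathrm{Td}^p}$ only up to a multiple of $\mathcal{F}_{\mathrm{Td}^1}$ (the general fact is exactly what the paper attributes to the proof of Theorem 3.2 of \cite{futaki-ono-sano0811}); correspondingly your exact identities $d_0C_{-m-1+\ell}=c_\ell\,\mathcal{F}_{\mathrm{Td}^\ell}$ should be read modulo such $\mathcal{F}_{\mathrm{Td}^1}$-corrections and modulo the overall sign/normalization of the weights $\langle\mathbf{e}_{j,b},\mathbf{b}\rangle$, which you rightly flag. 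Since $\mathcal{F}_{\mathrm{Td}^1}$ is itself a nonzero multiple of the localized sum with $p=1$, neither point affects the equality of the two linear spans, so the argument stands.
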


Let $V$ be the toric Fano manifold defined by
(\ref{eq:vertices_fano_polytope}) and $P$ be the Delzant polytope of
$(V, K_V^{-1})$. Then we can calculate $\{\w_j,
{\bf{e}}_{j,b}\}_{(1\leq j\leq 64,\; 1\leq b\leq 7)}$ explicitly from
the argument in Section 2, and so we see whether all
$\mathcal{F}_{Td^p}$ vanish or not. Note that these 64 verticies of
$P$ correspond to the facets of the Fano polytope defined by
(\ref{eq:vertices_fano_polytope}). However it is difficult to check it
directly, because in our case the Hilbert series has too many terms.

To solve this problem, we use the following proposition.

\begin{proposition}\label{prop:42}
If
$\mathcal{F}_{\text{\upshape Td}^p}=0$
for each $p=1,\dots,m$ then
\begin{equation}\label{eq:hil}
\frac{\partial}{\partial x}C(x^{n_1},\dots,x^{n_m},e^{-(m+1)t})
_{|x=1}=0
\end{equation}
for any ${\bf n}=(n_1,\dots,n_m)\in \mathbb{N}^m$.
\end{proposition}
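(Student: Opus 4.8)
The plan is to connect the vanishing of $\mathcal{F}_{\text{\upshape Td}^p}$ for all $p$ to the specialization of the Hilbert series along the curve $x\mapsto (x^{n_1},\dots,x^{n_m},e^{-(m+1)t})$ via Theorem \ref{thm:41}. First I would fix $\mathbf{n}=(n_1,\dots,n_m)\in\mathbb{N}^m$ and choose a ray $\xi=(\mathbf{b},m+1)\in\mathcal{C}_R$ with $\mathbf{b}=\mathbf{b}(\mathbf{n})$ designed so that the substitution $\mathbf{x}=\mathbf{e}^{-t\xi}$ recovers, up to the replacement $e^{-t}\rightsquigarrow x$, the left-hand side of \eqref{eq:hil}; concretely one wants $b_i$ proportional to $n_i$ so that $x_i^{a_i}=(e^{-b_i t})^{a_i}$ matches $x^{n_i a_i}$ after setting $x=e^{-t}$. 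Then $C(x^{n_1},\dots,x^{n_m},e^{-(m+1)t})$ is exactly $C(\mathbf{e}^{-t\xi},\mathcal{C}^*)$ for this $\xi$, and its Laurent expansion at $t=0$ is $\sum_{i\ge -m-1} C_i(\mathbf{b})\,t^{-i}$ with $\mathbf{b}=\mathbf{b}(\mathbf{n})$ as above.

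Next I would differentiate in $x$ at $x=1$ and translate this into a statement about the coefficient functions $C_i(\mathbf{b})$. Writing $x=e^{-t}$, the operator $\partial_x|_{x=1}$ corresponds (up to sign) to $\partial_t|_{t=0}$, so $\partial_x C(\dots)|_{x=1}$ picks out a linear combination of the values $C_i(\mathbf{b}(\mathbf{n}))$ at fixed $\mathbf{n}$. The key observation is that the map $\mathbf{n}\mapsto \mathbf{b}(\mathbf{n})$ is linear, so as a function of $\mathbf{n}$ (extended to $\mathbf{b}\in\mathcal{C}_R$ by homogeneity/linearity) the expression $\partial_x C(x^{n_1},\dots,x^{n_m},e^{-(m+1)t})|_{x=1}$ is built from the $C_i(\mathbf{b})$ evaluated along the line $\mathbf{b}=\mathbf{b}(\mathbf{n})$. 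The hypothesis $\mathcal{F}_{\text{\upshape Td}^p}=0$ for all $p$ means, by Theorem \ref{thm:41}, that every $d_0 C_i(\mathbf{b})$ vanishes as a linear functional on $\mathfrak{t}\otimes\mathbb{C}$; combined with the fact (from the rational-function form and Brion's formula) that each $C_i(\mathbf{b})$ is a homogeneous rational — in fact, on the relevant locus, polynomial — function of $\mathbf{b}$ of the appropriate degree, vanishing of all first derivatives at $\mathbf{b}=0$ forces the lower-order pieces to be constant in $\mathbf{b}$. Tracking degrees: $C_{-m-1}(\mathbf{b})$ has homogeneity degree $-1$... more carefully, $C({\bf e}^{-t\xi})$ scales so that $C_i(\mathbf{b})$ is homogeneous of degree $i$ in $t$ hence of a fixed degree in $\mathbf{b}$; the $\mathbf{n}$-derivative at the basepoint therefore isolates precisely the $d_0 C_i$ contributions, which are zero.

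Concretely, the chain I would run is: $(1)$ $\partial_x C(x^{n_1},\dots,x^{n_m},e^{-(m+1)t})|_{x=1}$, after the substitution $x=e^{-s}$ and Taylor expansion in $s$ at $s=0$, equals a series in $t$ each of whose coefficients is a $t$-independent linear functional of $\mathbf{n}$ obtained by applying $d_0$ to some $C_i$; $(2)$ by Theorem \ref{thm:41} those $d_0 C_i$ all lie in $\operatorname{span}\{\mathcal{F}_{\text{\upshape Td}^1},\dots,\mathcal{F}_{\text{\upshape Td}^m}\}$, which is $\{0\}$ under the hypothesis; $(3)$ hence every coefficient in the $t$-series vanishes, i.e. \eqref{eq:hil} holds identically in $t$ and in $\mathbf{n}$. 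I expect the main obstacle to be step $(1)$: making the bookkeeping precise that differentiating the specialized Hilbert series in $x$ at $x=1$ really does produce exactly the functionals $d_0 C_i(\mathbf{b})$ — one must verify that the $e^{-(m+1)t}$ in the last slot (which is \emph{not} being differentiated) does not contaminate this identification, and that the pole structure at $t=0$ is compatible with extracting a finite linear combination. The cleanest way around this is probably to argue directly from the integral/rational expression $C(\mathbf{e}^{-t\xi},\mathcal{C}^*)=\sum_j (1-\mathbf{x}^{\boldsymbol{\mu}_j})^{-1}\prod_b(1-\tilde{\mathbf{x}}^{\mathbf{e}_{j,b}})^{-1}$, noting that $\partial_x|_{x=1}$ of each summand is, after clearing denominators, a rational function whose Laurent coefficients in $t$ are $d_0$ of the corresponding pieces of $C_i$, so that the whole derivative vanishes term-by-term once all $\mathcal{F}_{\text{\upshape Td}^p}$ — equivalently all $d_0 C_i$ — do.
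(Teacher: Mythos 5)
Your argument is correct and is essentially the paper's own proof: Theorem \ref{thm:41} gives $\frac{\partial}{\partial b_i}C(e^{-b_1t},\dots,e^{-b_mt},e^{-(m+1)t})|_{\mathbf{b}=0}=0$ for all $i$, and the chain rule along $b_i\propto n_i$ (equivalently $\partial_x|_{x=1}=\sum_i n_i\,\partial_{x_i}$ in the first $m$ slots, the last slot being untouched) yields \eqref{eq:hil}. The bookkeeping you worry about in step (1) is a non-issue for exactly this reason, so your extra detour through Brion's formula is unnecessary.
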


\begin{proof}
If $\mathcal{F}_{\text{\upshape Td}^p}=0$
for each $p=1,\dots,m$ then
$$
\frac{\partial}{\partial b_i}C(e^{-b_1t},\dots,e^{-b_mt},e^{-(m+1)t})
_{|{\bf b}=0}=0
$$
holds for each $i=1,\dots,m$ by Theorem \ref{thm:41}.
Hence we easily see the proposition by the chain rule.
\end{proof}

The left hand side of \eqref{eq:hil} for the toric Fano 7-fold
associated with (\ref{eq:vertices_fano_polytope}) is computable with
computer. The combinatorial data we need is in Appendix.  For example,
in consequence of the Maple calculation, we can see
$$
\frac{\partial}{\partial x}C(x^{n_1},\cdots ,x^{n_7},e^{-8t})_{|x=1}
=
-\frac{184e^{-8t}(2e^{-32t}+31e^{-24t}+70e^{-16t}+31e^{-8t}+2)}{(-1+e^{-8t})^7}
\neq 0
$$
for $(n_1,n_2,n_3,n_4,n_5,n_6,n_7)=(1,2,3,4,5,6,7).$

Thus, by Proposition \ref{prop:42},
Theorem~\ref{thm:main} has been proved.

\section{Appendix}
\subsection{Combinatorial data of the Nill-Paffenholz's example}

In this subsection we shall list up the necessary combinatorial data
of Nill-Paffenholz's example for the calculation in
Section~\ref{sec:hilbert}.

$\bullet $ The vertices of the Fano polytope $Q$ are given by
(\ref{eq:vertices_fano_polytope}).

$\bullet $ The $64$ vertices of the polar polytope $P$ are given by
\begin{equation*}
\begin{aligned}
 (\w_1,\w_2,\w_3,\cdots,\w_{64}) \hspace{8.0cm}\\
 =\left(
\begin{array}
{@{\,}cccccccccccccccccccccccccccccccccccccccccccccccccccccccccccccccc@{\,}}
\!\!-1 & \!\!-1 & \!\!-1 & \!\!-1 & \!\!-1 & \!\!-1 & \!\!2 & \!\!-1 
& \!\!-1 & \!\!-1 & \!\!2  & \!\!-1 & \!\!-1 & \!\!-1 & \!\!2 & \!\!-1  \\  
\!\!-1 & \!\!-1 & \!\!-1 & \!\!-1 & \!\!-1 & \!\!2  & \!\!-1& \!\!-1 
& \!\!-1 & \!\!2  & \!\!-1 & \!\!-1 & \!\!-1 & \!\!2  & \!\!-1& \!\!-1  \\                              
\!\!-1 & \!\!-1 & \!\!-1 & \!\!-1 & \!\!2  & \!\!-1 & \!\!-1& \!\!-1 
& \!\!2  & \!\!\!\!-1 & \!\!-1 & \!\!-1 &  \!\!2 & \!\!-1 & \!\!-1& \!\!-1  \\                         
\!\!-1 & \!\!-1 & \!\!-1 & \!\!1  & \!\!-1 & \!\!-1 & \!\!-1& \!\!-1 
& \!\!-1 & \!\!-1 & \!\!-1 & \!\!-1 & \!\!-1 & \!\!-1 & \!\!-1& \!\!-1  \\                          
\!\!-1 & \!\!1  & \!\!-1 & \!\!-1 & \!\!-1 & \!\!-1 & \!\!-1& \!\!-1 
& \!\!1  &  \!\!1 & \!\!1  & \!\!5  & \!\!-1 & \!\!-1 & \!\!-1& \!\!-1  \\                              
\!\!-1 & \!\!-1 & \!\!1  & \!\!-1 & \!\!-1 & \!\!-1 & \!\!-1& \!\!-1 
& \!\!-1 & \!\!-1 & \!\!-1 & \!\!-1 &  \!\!1 &  \!\!1 &  \!\!1&  \!\!5  \\                          
\!\!-1 & \!\!-1 & \!\!-1 & \!\!-1 & \!\!-1 & \!\!-1 & \!\!-1& \!\!1  
& \!\!-1 & \!\!-1 & \!\!-1 &  \!\!1 & \!\!-1 & \!\!-1 & \!\!-1&  \!\!1  \\ 
\end{array}
\right.
\end{aligned}
\end{equation*}
\begin{equation*}
\begin{aligned}
\left.\begin{array}
{@{\,}cccccccccccccccccccccccccccccccccccccccccccccccccccccccccccccccc@{\,}}
\!\!-1&\!\!-1 &  \!\!2 & \!\!-1 & \!\!-1 & \!\!  2 & \!\! -1 & \!\! 2 
& \!\! -1 & \!\! 0  & \!\! 2  & \!\! 2  & \!\! 2  & \!\!  2 & \!\! 2  
& \!\! 2 & \!\! -1 & \!\!  0 \\  
-1& \!\! 2 & \!\! -1 & \!\! -1 & \!\!  2 & \!\! -1 & \!\! -1 & \!\! 2 
& \!\! 0  & \!\! -1 & \!\! 2  & \!\! 2  & \!\! 2  & \!\!  2 & \!\! 2  
& \!\! -1& \!\!  2 & \!\!  0 \\  
2& \!\!-1 & \!\! -1 & \!\! -1 & \!\!  2 & \!\!  2 & \!\!  0 & \!\! -1
& \!\! -1 & \!\! -1 & \!\! 2  & \!\! 2  & \!\! 2  & \!\!  2 & \!\! -1 
& \!\!  2& \!\!  2 & \!\!  0 \\                         
1 & \!\! 1 & \!\!  1 & \!\!  5 & \!\! -1 & \!\! -1 & \!\! -1 & \!\! -1
& \!\! -1 & \!\! -1 & \!\!  1 & \!\! -1 & \!\! -1 & \!\! -1 & \!\!  1 
& \!\!  1& \!\!  1 & \!\!  5 \\                          
-1& \!\!-1 & \!\! -1 & \!\! -1 & \!\! -1 & \!\! -1 & \!\! -1 & \!\! -1
& \!\! -1 & \!\! -1 & \!\! -1 & \!\! -1 & \!\! -1 & \!\!  1 & \!\! -1 
& \!\! -1& \!\! -1 & \!\!  -1\\                              
-1& \!\!-1 & \!\! -1 & \!\! -1 & \!\! -1 & \!\! -1 & \!\! -1 & \!\! -1
& \!\! -1 & \!\! -1 & \!\! -1 & \!\! -1 & \!\!  1 & \!\! -1 & \!\! -1 
& \!\! -1& \!\! -1 & \!\!  -1\\                          
-1& \!\!-1 & \!\! -1 & \!\!  1 & \!\! -1 & \!\! -1 & \!\!  1 & \!\! -1
& \!\! 1  & \!\!  1 & \!\! -1 & \!\! -1 & \!\! -1 & \!\! -1 & \!\! -1 
& \!\! -1& \!\! -1 & \!\!   1\\ 
\end{array}
\right.
\end{aligned}
\end{equation*}
\begin{equation*}
\begin{aligned}
\left.\begin{array}
{@{\,}cccccccccccccccccccccccccccccccccccccccccccccccccccccccccccccccc@{\,}}
0& \!\! 2 & \!\!  2 & \!\! -1 & \!\!  0 & \!\!  2 & \!\!  2 & \!\! -1
& \!\! 0  & \!\! 0  & \!\! -1 & \!\! -1 & \!\!  0 & \!\! -1 & \!\! -1 
& \!\! 0 & \!\! -1 & \!\!  -1 \\  
0& \!\! 2 & \!\! -1 & \!\!  2 & \!\!  0 & \!\!  2 & \!\! -1 & \!\! 2 
& \!\! 0  & \!\! 0  & \!\! -1 & \!\!  0 & \!\! -1 & \!\! -1 & \!\! 0  
& \!\! -1& \!\! -1 & \!\!  0 \\                              
0& \!\!-1 & \!\!  2 & \!\!  2 & \!\!  0 & \!\! -1 & \!\!  2 & \!\!  2
& \!\! 0  & \!\! -1 & \!\! 0  & \!\! -1 & \!\! -1 & \!\!  0 & \!\! -1 
& \!\! -1& \!\!  0 & \!\! -1 \\                         
-1& \!\!-1 & \!\! -1 & \!\! -1 & \!\! -1 & \!\! -1 & \!\! -1 & \!\! -1
& \!\! -1 & \!\! 5  & \!\! -1 & \!\! -1 & \!\! -1 & \!\! -1 & \!\! -1 
& \!\! -1& \!\!  5 & \!\!  5 \\                          
-1& \!\!-1 & \!\! -1 & \!\! -1 & \!\! -1 & \!\!  1 & \!\!  1 & \!\!  1
& \!\! 5  & \!\! -1 & \!\!  5 & \!\!  5 & \!\!  5 & \!\! -1 & \!\! -1 
& \!\! -1& \!\! -1 & \!\!  -1\\                              
 \!\!-1& \!\! 1 & \!\!  1 &   \!\!1 &  \!\! 5 &  \!\!-1 &  \!\!-1 
&  \!\!-1&  \!\!-1 &  \!\!-1 &  \!\!-1 &  \!\!-1 &  \!\!-1 
&   \!\!5 &   \!\!5 &   \!\!5&  \!\!-1 &   \!\!-1\\                          
 \!\!1& \!\!-1 &  \!\!-1 & \!\! -1 &   \!\!1 &  \!\!-1 &  \!\!-1 
&  \!\!-1&  \!\!1  &  1 &  1 &  1 &  1 &  1 &  1 
&   \!\!1&   \!\!1 &    \!\!1\\ 
\end{array}
\right.
\end{aligned}
\end{equation*}
\begin{equation*}
\left.\begin{array}
{@{\,}cccccccccccccccccccccccccccccccccccccccccccccccccccccccccccccccc@{\,}}
0 & \!\! -1 & \!\!  0 & \!\!  0 & \!\!  0 & \!\!  0 & \!\! 0 & \!\!  0 
& \!\!  0 & \!\! -1 & \!\! -1  & \!\! -1 \\   
-1 & \!\!  0 & \!\! -1 & \!\!  0 & \!\! -1 & \!\! -1 & \!\! -1& \!\!  0 
& \!\!  0 & \!\!  0 & \!\!  0  & \!\!  0 \\                              
-1 & \!\!  0 & \!\!  0 & \!\! -1 & \!\!  0 & \!\!  0 & \!\!  0& \!\! -1 
& \!\! -1 & \!\!  0 & \!\!  0  & \!\!  0 \\                         
5 & \!\! -1 & \!\! -1 & \!\! -1 & \!\!  5 & \!\! -1 & \!\! -1& \!\! -1 
& \!\! -1 & \!\! -1 & \!\! -1  & \!\!  5 \\                          
-1 & \!\! -1 & \!\! -1 & \!\! -1 & \!\! -1 & \!\!  5 & \!\! -1& \!\! -1 
& \!\! 5  & \!\!  5 & \!\! -1  & \!\! -1 \\                              
-1 & \!\! -1 & \!\! -1 & \!\! -1 & \!\! -1 & \!\! -1 & \!\!  5& \!\! 5  
& \!\! -1 & \!\! -1 & \!\!  5  & \!\! -1 \\                          
1 & \!\!  1 & \!\!  1 & \!\!  1 & \!\!  1 & \!\!  1 & \!\!  1& \!\! 1  
& \!\!  1 & \!\!  1 & \!\!  1  & \!\!  1 \\
\end{array}
\right).
\end{equation*}

$\bullet $ The neighbors of each vertex of $P$ are listed in 
Table~\ref{tab:neighbor} below. Here, vertices $v$ and $u$ of $P$ are called
neighbors if the interval $[u,v]$ is an edge of $P$.
\begin{table}[b]
\begin{center}
\begin{tabular}{|c|c|c|}
	\hline
	vertex & associated cone & neighbors 
	\\
	\hline
        $\w_1$
        &
		$\{\mathbf{ v}_1,\mathbf{ v}_2, \mathbf{ v}_3, \mathbf{ v}_7, 
		\mathbf{ v}_8, \mathbf{ v}_9, \mathbf{ v}_{11}\}$
        &
	         $\w_2, \w_3, \w_4, \w_5, \w_6, \w_7, \w_8$
   \\    
	\hline
        $\w_3$
        &
		$\{\mathbf{ v}_1,\mathbf{ v}_2, \mathbf{ v}_3, \mathbf{ v}_7, 
		\mathbf{ v}_8, \mathbf{ v}_{10}, \mathbf{ v}_{11}\}$
        &
        $\w_1, \w_2, \w_4, \w_{13}, \w_{14}, \w_{15}, \w_{16}$
   \\
        \hline
        $\w_7$
        &
		$\{\mathbf{ v}_6,\mathbf{ v}_2, \mathbf{ v}_3, \mathbf{ v}_7, 
		\mathbf{ v}_8, \mathbf{ v}_9, \mathbf{ v}_{11}\}$
        &
        $\w_1, \w_{11}, \w_{15}, \w_{19}, \w_{22}, \w_{24}, \w_{26}$
        \\
		\hline
        $\w_8$
        &
		$\{\mathbf{ v}_1,\mathbf{ v}_2, \mathbf{ v}_3, \mathbf{ v}_7, 
		\mathbf{ v}_8, \mathbf{ v}_9, \mathbf{ v}_{12}\}$
        &
        $\w_1, \w_{12}, \w_{16}, \w_{20}, \w_{23}, \w_{25}, \w_{26}$
   \\
   \hline
        $\w_{19}$
        &
		$\{\mathbf{ v}_6,\mathbf{ v}_2, \mathbf{ v}_3, \mathbf{ v}_8, 
		\mathbf{ v}_9, \mathbf{ v}_{10}, \mathbf{ v}_{11}\}$
        &
        $\w_4, \w_{7}, \w_{11}, \w_{15}, \w_{31}, \w_{32}, \w_{53}$
   \\  
   \hline
       $\w_{20}$
        &
		$\{\mathbf{ v}_1,\mathbf{ v}_2, \mathbf{ v}_3, \mathbf{ v}_8, 
		\mathbf{ v}_9, \mathbf{ v}_{10}, \mathbf{ v}_{12}\}$
        &
        $\w_4, \w_{8}, \w_{12}, \w_{16}, \w_{51}, \w_{52}, \w_{53}$
   \\
   \hline
        $\w_{24}$
        &
		$\{\mathbf{ v}_6,\mathbf{ v}_5, \mathbf{ v}_3, \mathbf{ v}_7, 
		\mathbf{ v}_8, \mathbf{ v}_9, \mathbf{ v}_{11}\}$
        &
        $\w_6, \w_{7}, \w_{28}, \w_{31}, \w_{36}, \w_{40}, \w_{56}$
  \\
		\hline
        $\w_{26}$
        &
		$\{\mathbf{ v}_6,\mathbf{ v}_2, \mathbf{ v}_3, \mathbf{ v}_7, 
		\mathbf{ v}_8, \mathbf{ v}_9, \mathbf{ v}_{12}\}$
        &
        $\w_7, \w_{8}, \w_{47}, \w_{50}, \w_{53}, \w_{55}, \w_{56}$
   \\
        \hline
        $\w_{27}$
        &
		$\{\mathbf{ v}_6,\mathbf{ v}_5, \mathbf{ v}_4, \mathbf{ v}_8, 
		\mathbf{ v}_9, \mathbf{ v}_{10}, \mathbf{ v}_{11}\}$
        &
        $\w_{28}, \w_{29}, \w_{30}, \w_{31}, \w_{32}, \w_{33}, \w_{34}$
   \\ 
		\hline
        $\w_{28}$
        &
		$\{\mathbf{ v}_6,\mathbf{ v}_5, \mathbf{ v}_4, \mathbf{ v}_7, 
		\mathbf{ v}_8, \mathbf{ v}_9, \mathbf{ v}_{11}\}$
        &
        $\w_{21}, \w_{22}, \w_{24}, \w_{27}, \w_{29}, \w_{30}, \w_{35}$
   \\  
	\hline
        $\w_{31}$
        &
		$\{\mathbf{ v}_6,\mathbf{ v}_5, \mathbf{ v}_3, \mathbf{ v}_8, 
		\mathbf{ v}_9, \mathbf{ v}_{10}, \mathbf{ v}_{11}\}$
        &
        $\w_{18}, \w_{19}, \w_{24}, \w_{27}, \w_{36}, \w_{40}, \w_{44}$
   \\ 
   \hline
       $\w_{34}$
        &
		$\{\mathbf{ v}_6,\mathbf{ v}_5, \mathbf{ v}_4, \mathbf{ v}_8, 
		\mathbf{ v}_9, \mathbf{ v}_{10}, \mathbf{ v}_{12}\}$
        & 
        $\w_{27}, \w_{35}, \w_{39}, \w_{43}, \w_{44}, \w_{57}, \w_{64}$
	\\
      	\hline
        $\w_{35}$
        &
		$\{\mathbf{ v}_6,\mathbf{ v}_5, \mathbf{ v}_4, \mathbf{ v}_7, 
		\mathbf{ v}_8, \mathbf{ v}_9, \mathbf{ v}_{12}\}$
        & 
        $\w_{28}, \w_{34}, \w_{39}, \w_{43}, \w_{54}, \w_{55}, \w_{56}$
        \\
        \hline
        $\w_{44}$
        &
		$\{\mathbf{ v}_6,\mathbf{ v}_5, \mathbf{ v}_3, \mathbf{ v}_8, 
		\mathbf{ v}_9, \mathbf{ v}_{10}, \mathbf{ v}_{12}\}$
        & 
        $\w_{31}, \w_{34}, \w_{52}, \w_{53}, \w_{56}, \w_{60}, \w_{61}$
	    \\
       \hline
        $\w_{53}$
        &
		$\{\mathbf{ v}_6,\mathbf{ v}_2, \mathbf{ v}_3, \mathbf{ v}_8, 
		\mathbf{ v}_9, \mathbf{ v}_{10}, \mathbf{ v}_{12}\}$
        & 
        $\w_{19}, \w_{20}, \w_{26}, \w_{44}, \w_{47}, \w_{50}, \w_{57}$
       \\ 
 	\hline
        $\w_{56}$
        &
		$\{\mathbf{ v}_6,\mathbf{ v}_5, \mathbf{ v}_3, \mathbf{ v}_7, 
		\mathbf{ v}_8, \mathbf{ v}_9, \mathbf{ v}_{12}\}$
        & 
        $\w_{24}, \w_{25}, \w_{26}, \w_{35}, \w_{44}, \w_{60}, \w_{61}$
      \\
	\hline
\end{tabular}
\end{center}
\caption{}
\label{tab:neighbor}
\end{table}
The other sets of neighbors unlisted in Table~\ref{tab:neighbor} can
be obtained by the symmetry of $V$.

\subsection{Computation data in Section \ref{sec:localization}}

In this subsection, we list all of the data, which are needed to
compute $f$ and $\mathcal{F}_{\mathrm{Td}^2}$.  First, we compute that
$f\equiv0$ by using its original localization formula.  Since $f$ is a
linear function in the variables $a_i,\!b_j,c$ ($1\le \!i, j\le \!3$) and
is symmetric among $\{a_i\}_{1\le i \le 3}$ and among $\{b_j\}_{1\le
  j\le 3}$ due to the symmetry of $V$, we can write
$$
	f(X)
	=A\sum_{i=1}^3 a_i + B\sum_{j=1}^3 b_j +Cc
$$
for some real numbers $A,B$ and $C$.  To show the vanishing of $f$, it
suffices to show that $f$ vanishes with respect to at least three
cases, for example,
$$
	(a_1, a_2, a_3, b_1, b_2, b_3, c)
	=
	\left\{\begin{array}{l}
	(-1,-1,-1, 1,2,3,1) 
	\\
	(1,1,1,1,2,3,4) 
	\\
	(-2,-2,-2, 1,2,3,1).
	\end{array}\right.
$$

The data of the first case is given in Table \ref{tab:Td1case1}.
\begin{table}
{\small
\begin{center}
\begin{tabular}{|c|c|l|l|l|l}
	\hline
		no. & fixed pt & $L(X)$ & $\det L(X)$ & $c_1(L(X))$ 
	\\
	\hline
			1-1&
		$(+++, \mathbf{ p}_1,\pm)$ &
		$(
			-1, -1, -1, 
			1, 2, -1, \pm3
		)$
		&
		$\pm6$
		& $(2, -4)$ 
	\\
	\hline
			1-2&
		$(-++, \mathbf{ p}_1,\pm)$ &
		$(
			1, -1, -1, 
			1, 2, -1, \pm4
		)$
		&
		$\mp8$
		& $(5,-3)$ 
	\\
	\hline
			1-3&
		$(+-+, \mathbf{ p}_1,\pm)$ &
		$(
			-1, 1, -1, 
			1, 2, -1, \pm4
		)$
		&
		$\mp8$
		& $(5,-3)$ 
	\\
	\hline
			1-4&
		$(++-, \mathbf{ p}_1,\pm)$ &
		$(
			-1, -1, 1, 
			1, 2, -1, \pm4
		)$
		&
		$\mp8$
		& $(5,-3)$ 
	\\
	\hline
			1-5&
		$(+--, \mathbf{ p}_1,\pm)$ &
		$(
			-1, 1, 1, 
			1, 2, -1, \pm5
		)$
		&
		$\pm10$
		& $(8,-2)$ 
	\\
	\hline
			1-6&
		$(-+-, \mathbf{ p}_1,\pm)$ &
		$(
			1, -1, 1, 
			1, 2, -1, \pm5
		)$
		&
		$\pm10$
		& $(8,-2)$ 
	\\
	\hline
			1-7&
		$(--+, \mathbf{ p}_1,\pm)$ &
		$(
			1, 1, -1, 
			1, 2, -1, \pm5
		)$
		&
		$\pm10$
		& $(8,-2)$ 
	\\
	\hline
			1-8&
		$(---, \mathbf{ p}_1,\pm)$ &
		$(
			1, 1, 1, 
			1, 2, -1, \pm6
		)$
		&
		$\mp12$
		& $(11,-1)$ 
	\\
	\hline
			2-1&
		$(+++, \mathbf{ p}_2,\pm)$ &
		$(
			-1, -1, -1, 
			-1, 1, -2, \pm5
		)$
		&
		$\mp10$
		& $(0,-10)$ 
	\\
	\hline
			2-2&
		$(-++, \mathbf{ p}_2,\pm)$ &
		$(
			1, -1, -1, 
			-1, 1, -2, \pm6
		)$
		&
		$\pm12$
		& $(3,-9)$ 
	\\
	\hline
			2-3&
		$(+-+, \mathbf{ p}_2,\pm)$ &
		$(
			-1, 1, -1, 
			-1, 1, -2, \pm6
		)$
		&
		$\pm12$
		& $(3,-9)$ 
	\\
	\hline
			2-4&
		$(++-, \mathbf{ p}_2,\pm)$ &
		$(
			-1, -1, 1, 
			-1, 1, -2, \pm6
		)$
		&
		$\pm12$
		& $(3,-9)$ 
	\\
	\hline
			2-5&
		$(+--, \mathbf{ p}_2,\pm)$ &
		$(
			-1, 1, 1, 
			-1, 1, -2, \pm7
		)$
		&
		$\mp14$
		& $(6,-8)$ 
	\\
	\hline
			2-6&
		$(-+-, \mathbf{ p}_2,\pm)$ &
		$(
			1, -1, 1, 
			-1, 1, -2, \pm7
		)$
		&
		$\mp14$
		& $(6,-8)$ 
	\\
	\hline
			2-7&
		$(--+, \mathbf{ p}_2,\pm)$ &
		$(
			1, 1, -1, 
			-1, 1, -2, \pm7
		)$
		&
		$\mp14$
		& $(6,-8)$ 
	\\
	\hline
			2-8&
		$(---, \mathbf{ p}_2,\pm)$ &
		$(
			1, 1, 1, 
			-1, 1, -2, \pm8
		)$
		&	
		$\pm16$
		& $(9,-7)$ 
	\\
	\hline
			3-1&
		$(+++, \mathbf{ p}_3,\pm)$ &
		$(
			-1, -1, -1, 
			-2, -1, -3, \pm7
		)$
		&
		$\pm42$
		& $(-2,-16)$ 
	\\
	\hline
			3-2&
		$(-++, \mathbf{ p}_3,\pm)$ &
		$(
			1, -1, -1, 
			-2, -1, -3, \pm8
		)$
		&
		$\mp48$
		& $(1,-15)$ 
	\\
	\hline
			3-3&
		$(+-+, \mathbf{ p}_3,\pm)$ &
		$(
			-1, 1, -1, 
			-2, -1, -3, \pm8
		)$
		&
		$\mp48$
		& $(1,-15)$ 
	\\
	\hline
			3-4&
		$(++-, \mathbf{ p}_3,\pm)$ &
		$(
			-1, -1, 1, 
			-2, -1, -3, \pm9
		)$
		&
		$\mp48$
		& $(1,-15)$ 
	\\
	\hline
			3-5&
		$(+--, \mathbf{ p}_3,\pm)$ &
		$(
			-1, 1, 1, 
			-2, -1, -3, \pm9
		)$
		&
		$\pm54$
		& $(4,-14)$ 
	\\
	\hline
			3-6&
		$(-+-, \mathbf{ p}_3,\pm)$ &
		$(
			1, -1, 1, 
			-2, -1, -3, \pm9
		)$
		&
		$\pm54$
		& $(4,-14)$ 
	\\
	\hline
			3-7&
		$(--+, \mathbf{ p}_3,\pm)$ &
		$(
			1, 1, -1, 
			-2, -1, -3, \pm9
		)$
		&
		$\mp54$
		& $(4,-14)$ 
	\\
	\hline
			3-8&
		$(---, \mathbf{ p}_3,\pm)$ &
		$(
			1, 1, 1, 
			-2, -1, -3, \pm10
		)$
		&	
		$\mp60$
		& $(7, -13)$ 
	\\
	\hline
			4-1&
		$(+++, \mathbf{ p}_4,\pm)$ &
		$(
			-1, -1, -1, 
			1, 2, 3, \pm 1
		)$
		&
		$\mp6$
		& $(4, 2)$ 
	\\
	\hline
			4-2&
		$(-++, \mathbf{ p}_4,\pm)$ &
		$(
			1, -1, -1, 
			1, 2, 3, \pm2
		)$
		&
		$\pm12$
		& $(7,3)$ 
	\\
	\hline
			4-3&
		$(+-+, \mathbf{ p}_4,\pm)$ &
		$(
			-1, 1, -1, 
			1, 2, 3, \pm2
		)$
		&
		$\pm12$
		& $(7,3)$ 
	\\
	\hline
			4-4&
		$(++-, \mathbf{ p}_4,\pm)$ &
		$(
			-1, -1, 1, 
			1, 2, 3, \pm3
		)$
		&
		$\pm12$
		& $(7,3)$
	\\
	\hline
			4-5&
		$(+--, \mathbf{ p}_4,\pm)$ &
		$(
			-1, 1, 1, 
			1, 2, 3, \pm3
		)$
		&
		$\mp18$
		& $(10,4)$ 
	\\
	\hline
			4-6&
		$(-+-, \mathbf{ p}_4,\pm)$ &
		$(
			1, -1, 1, 
			1, 2, 3, \pm3
		)$
		&
		$\mp18$
		& $(10,4)$ 
	\\
	\hline
			4-7&
		$(--+, \mathbf{ p}_4,\pm)$ &
		$(
			1, 1, -1, 
			1, 2, 3, \pm3
		)$
		&
		$\mp18$
		& $(10,4)$ 
	\\
	\hline
			4-8&
		$(---, \mathbf{ p}_4,\pm)$ &
		$(
			1, 1, 1, 
			1, 2, 3, \pm4
		)$
		&	
		$\pm24$
		& $(13,5)$ 
	\\
	\hline
\end{tabular}
\end{center}
\caption{}
\label{tab:Td1case1}
}
\end{table}
In the columns of $c_1(L(X))$ in Table \ref{tab:Td1case1}, the first
element corresponds to $(+)$-case and the other to $(-)$-case.  Our
computation is divided into the four parts of 
Table~\ref{tab:Td1case1}, which are labeled $\{(\mbox{1-}i)\}_{1\le i \le
  8}$, $\{(\mbox{2-}i)\}_{1\le i \le 8}$, $\{(\mbox{3-}i)\}_{1\le i
  \le 8}$ and $\{(\mbox{4-}i)\}_{1\le i \le 8}$.  
The sum among
$\{(\mbox{1-}i)\}_{1\le i \le 8}$ is given by
$$
	\frac{2^8}{6}-\frac{(-4)^8}{6}
	+3\bigg(\!-\frac{5^8}{8}+\frac{(-3)^8}{8}\bigg)
	+3\bigg(\frac{8^8}{10}-\frac{(-2)^8}{10}\bigg)
	-\frac{11^8}{12}+\frac{(-1)^8}{12}
	\!=\!-12985056.
$$
The sum among $\{(\mbox{2-}i)\}_{1\le i \le 8}$ is given by
$$
	-\frac{0^8}{10}+\frac{(-10)^8}{10}
	+3\bigg(\frac{3^8}{12}-\frac{(-9)^8}{12}\bigg)
	+3\bigg(-\frac{6^8}{14}+\frac{(-8)^8}{14}\bigg)
	+\frac{9^8}{16}-\frac{(-7)^8}{16}
	=4805280.
$$
The sum among $\{(\mbox{3-}i)\}_{1\le i \le 8}$ is given by
\begin{multline*}
	\frac{(-2)^8}{42}-\frac{(-16)^8}{42}
	+3\bigg(-\frac{1^8}{48}+\frac{(-15)^8}{48}\bigg)
	+3\bigg(\frac{4^8}{54}-\frac{(-14)^8}{54}\bigg)
	-\frac{7^8}{60}+\frac{(-13)^8}{60}
	=-10565664.
\end{multline*}
The sum among $\{(\mbox{4-}i)\}_{1\le i \le 8}$ is given by
$$
	-\frac{4^8}{6}+\frac{2^8}{6}
	+3\bigg(\frac{7^8}{12}-\frac{3^8}{12}\bigg)
	+3\bigg(-\frac{10^8}{18}+\frac{4^8}{18}\bigg)
	+\frac{13^8}{24}-\frac{5^8}{24}
	=18745440.
$$
Then, the total sum is equal to zero.

The data of the second case is given in Table \ref{tab:Td2}.  In this
case, Table \ref{tab:Td2} coincides with Table~\ref{tab:Td1case1} up
to order.  For example, the row (1-1) and (1-2) in Table~\ref{tab:Td2}
coincide with the row (1-8) and (1-7) in Table~\ref{tab:Td1case1}
respectively.  Hence, $f$ in this case also vanishes.

The data of the third case is given in Table \ref{tab:Td1case3}.

\begin{table}
{\small
\begin{center}
\begin{tabular}{|c|c|l|l|l|l}
	\hline
		no. & fixed pt & $L(X)$ & $\det L(X)$ & $c_1(L(X))$ 
	\\
	\hline
			1-1&
		$(+++, \mathbf{ p}_1,\pm)$ &
		$(
			-2, -2, -2, 
			1, 2, -1, \pm3
		)$
		&
		$\pm48$
		& $(-1, -7)$ 
	\\
	\hline
			1-2&
		$(-++, \mathbf{ p}_1,\pm)$ &
		$(
			2, -2, -2, 
			1, 2, -1, \pm5
		)$
		&
		$\mp80$
		& $(5,-5)$ 
	\\
	\hline
			1-3&
		$(+-+, \mathbf{ p}_1,\pm)$ &
		$(
			-2, 2, -2, 
			1, 2, -1, \pm5
		)$
		&
		$\mp80$
		& $(5,-5)$ 
	\\
	\hline
			1-4&
		$(++-, \mathbf{ p}_1,\pm)$ &
		$(
			-2, -2, 2, 
			1, 2, -1, \pm5
		)$
		&
		$\mp80$
		& $(5,-5)$ 
	\\
	\hline
			1-5&
		$(+--, \mathbf{ p}_1,\pm)$ &
		$(
			-2, 2, 2, 
			1, 2, -1, \pm7
		)$
		&
		$\pm112$
		& $(11,-3)$ 
	\\
	\hline
			1-6&
		$(-+-, \mathbf{ p}_1,\pm)$ &
		$(
			2, -2, 2, 
			1, 2, -1, \pm7
		)$
		&
		$\pm112$
		& $(11,-3)$ 
	\\
	\hline
			1-7&
		$(--+, \mathbf{ p}_1,\pm)$ &
		$(
			2, 2, -2, 
			1, 2, -1, \pm7
		)$
		&
		$\pm112$
		& $(11,-3)$ 
	\\
	\hline
			1-8&
		$(---, \mathbf{ p}_1,\pm)$ &
		$(
			2, 2, 2, 
			1, 2, -1, \pm9
		)$
		&
		$\mp144$
		& $(17,-1)$ 
	\\
	\hline
			2-1&
		$(+++, \mathbf{ p}_2,\pm)$ &
		$(
			-2, -2, -2, 
			-1, 1, -2, \pm5
		)$
		&
		$\mp80$
		& $(-3,-13)$ 
	\\
	\hline
			2-2&
		$(-++, \mathbf{ p}_2,\pm)$ &
		$(
			2, -2, -2, 
			-1, 1, -2, \pm7
		)$
		&
		$\pm112$
		& $(3,-11)$ 
	\\
	\hline
			2-3&
		$(+-+, \mathbf{ p}_2,\pm)$ &
		$(
			-2, 2, -2, 
			-1, 1, -2, \pm7
		)$
		&
		$\pm112$
		& $(3,-11)$ 
	\\
	\hline
			2-4&
		$(++-, \mathbf{ p}_2,\pm)$ &
		$(
			-2, -2, 2, 
			-1, 1, -2, \pm7
		)$
		&
		$\pm112$
		& $(3,-11)$ 
	\\
	\hline
			2-5&
		$(+--, \mathbf{ p}_2,\pm)$ &
		$(
			-2, 2, 2, 
			-1, 1, -2, \pm9
		)$
		&
		$\mp144$
		& $(9,-9)$ 
	\\
	\hline
			2-6&
		$(-+-, \mathbf{ p}_2,\pm)$ &
		$(
			2, -2, 2, 
			-1, 1, -2, \pm9
		)$
		&
		$\mp144$
		& $(9,-9)$ 
	\\
	\hline
			2-7&
		$(--+, \mathbf{ p}_2,\pm)$ &
		$(
			2, 2, -2, 
			-1, 1, -2, \pm9
		)$
		&
		$\mp144$
		& $(9,-9)$ 
	\\
	\hline
			2-8&
		$(---, \mathbf{ p}_2,\pm)$ &
		$(
			2, 2, 2, 
			-1, 1, -2, \pm11
		)$
		&
		$\pm176$
		& $(15,-7)$ 
	\\
	\hline
			3-1&
		$(+++, \mathbf{ p}_3,\pm)$ &
		$(
			-2, -2, -2, 
			-2, -1, -3, \pm7
		)$
		&
		$\pm336$
		& $(-5,-19)$ 
	\\
	\hline
			3-2&
		$(-++, \mathbf{ p}_3,\pm)$ &
		$(
			2, -2, -2, 
			-2, -1, -3, \pm9
		)$
		&
		$\mp432$
		& $(1,-17)$ 
	\\
	\hline
			3-3&
		$(+-+, \mathbf{ p}_3,\pm)$ &
		$(
			-2, 2, -2, 
			-2, -1, -3, \pm9
		)$
		&
		$\mp432$
		& $(1,-17)$ 
	\\
	\hline
			3-4&
		$(++-, \mathbf{ p}_3,\pm)$ &
		$(
			-2, -2, 2, 
			-2, -1, -3, \pm9
		)$
		&
		$\mp432$
		& $(1,-17)$ 
	\\
	\hline
			3-5&
		$(+--, \mathbf{ p}_3,\pm)$ &
		$(
			-2, 2, 2, 
			-2, -1, -3, \pm11
		)$
		&
		$\pm528$
		& $(7,-15)$ 
	\\
	\hline
			3-6&
		$(-+-, \mathbf{ p}_3,\pm)$ &
		$(
			2, -2, 2, 
			-2, -1, -3, \pm11
		)$
		&
		$\pm528$
		& $(7,-15)$ 
	\\
	\hline
			3-7&
		$(--+, \mathbf{ p}_3,\pm)$ &
		$(
			2, 2, -2, 
			-2, -1, -3, \pm11
		)$
		&
		$\pm528$
		& $(7,-15)$ 
	\\
	\hline
			3-8&
		$(---, \mathbf{ p}_3,\pm)$ &
		$(
			2, 2, 2, 
			-2, -1, -3, \pm13
		)$
		&	
		$\mp624$
		& $(13, -13)$ 
	\\
	\hline
			4-1&
		$(+++, \mathbf{ p}_4,\pm)$ &
		$(
			-2, -2, -2, 
			1, 2, 3, \pm 1
		)$
		&
		$\mp48$
		& $(1, -1)$ 
	\\
	\hline
			4-2&
		$(-++, \mathbf{ p}_4,\pm)$ &
		$(
			2, -2, -2, 
			1, 2, 3, \pm3
		)$
		&
		$\pm144$
		& $(7,1)$ 
	\\
	\hline
			4-3&
		$(+-+, \mathbf{ p}_4,\pm)$ &
		$(
			-2, 2, -2, 
			1, 2, 3, \pm3
		)$
		&
		$\pm144$
		& $(7,1)$ 
	\\
	\hline
			4-4&
		$(++-, \mathbf{ p}_4,\pm)$ &
		$(
			-2, -2, 2, 
			1, 2, 3, \pm3
		)$
		&
		$\pm144$
		& $(7,1)$ 
	\\
	\hline
			4-5&
		$(+--, \mathbf{ p}_4,\pm)$ &
		$(
			-2, 2, 2, 
			1, 2, 3, \pm5
		)$
		&
		$\mp240$
		& $(13,3)$ 
	\\
	\hline
			4-6&
		$(-+-, \mathbf{ p}_4,\pm)$ &
		$(
			2, -2, 2, 
			1, 2, 3, \pm5
		)$
		&
		$\mp240$
		& $(13,3)$ 
	\\
	\hline
			4-7&
		$(--+, \mathbf{ p}_4,\pm)$ &
		$(
			2, 2, -2, 
			1, 2, 3, \pm5
		)$
		&
		$\mp240$
		& $(13,3)$ 
	\\
	\hline
			4-8&
		$(---, \mathbf{ p}_4,\pm)$ &
		$(
			2, 2, 2, 
			1, 2, 3, \pm7
		)$
		&	
		$\pm336$
		& $(19,5)$ 
	\\
	\hline
\end{tabular}
\end{center}
\caption{}
\label{tab:Td1case3}
}
\end{table}
Then, the sum among $\{(\mbox{1-}i)\}_{1\le i \le 8}$ is given by
\begin{multline*}
	\frac{(-1)^8}{48}-\frac{(-7)^8}{48}
	+3\bigg(-\frac{5^8}{80}+\frac{(-5)^8}{80}\bigg)
	+3\bigg(\frac{11^8}{112}-\frac{(-3)^8}{112}\bigg)
	-\frac{17^8}{114}+\frac{(-1)^8}{114}
	=-42821280.
\end{multline*}
The sum among $\{(\mbox{2-}i)\}_{1\le i \le 8}$ is given by
\begin{multline*}
	-\frac{(-3)^8}{80}+\frac{(-13)^8}{80}
	+3\bigg(\frac{3^8}{112}-\frac{(-11)^8}{112}\bigg)
	+3\bigg(-\frac{9^8}{144}+\frac{(-9)^8}{144}\bigg)
	+\frac{15^8}{176}-\frac{(-7)^8}{176}
	=18984096.
\end{multline*}
The sum among $\{(\mbox{3-}i)\}_{1\le i \le 8}$ is given by
\begin{multline*}
	\frac{(-5)^8}{336}-\frac{(-19)^8}{336}
	+3\bigg(-\frac{1^8}{432}+\frac{(-17)^8}{432}\bigg)
	+3\bigg(\frac{7^8}{528}-\frac{(-15)^8}{528}\bigg)
	-\frac{13^8}{624}+\frac{(-13)^8}{624}
	=-16631520.
\end{multline*}
The sum among $\{(\mbox{4-}i)\}_{1\le i \le 8}$ is given by
$$
	-\frac{1^8}{48}+\frac{(-1)^8}{48}
	+3\bigg(\frac{7^8}{144}-\frac{1^8}{144}\bigg)
	+3\bigg(-\frac{13^8}{240}+\frac{3^8}{20}\bigg)
	+\frac{19^8}{336}-\frac{5^8}{336}
	=40468704.
$$
Then, the total sum is equal to zero.

Next, we compute $\mathcal{F}_{\mathrm{Td}^2}(X)$, where $X$ is the
holomorphic vector field associated with $\sigma_t$ for when $(a_1,
a_2, a_3, b_1, b_2, b_3, c)=(1,1,1,1,2,3,4)$.

\begin{table}
\begin{center}
\begin{tabular}{|c|c|l|l|l|l|}
	\hline
	no. & fixed pt & $L(X)$ & $\det L(X)$ & $c_1(L(X))$ & $c_2(L(X))$
	\\
	\hline
			1-1&
		$(+++, \mathbf{ p}_1,\pm)$ &
		$(
			1, 1, 1, 
			1, 2, -1, \pm6
		)$
		&
		$\mp12$
		& $(11, -1)$ & $(38, -22)$
	\\
	\hline
			1-2&
		$(-++, \mathbf{ p}_1,\pm)$ &
		$(
			-1, 1, 1, 
			1, 2, -1, \pm5
		)$
		&
		$\pm10$
		& $(8,-2)$ & $(15, -15)$
	\\
	\hline
			1-3&
		$(+-+, \mathbf{ p}_1,\pm)$ &
		$(
			1, -1, 1, 
			1, 2, -1, \pm5
		)$
		&
		$\pm10$
		& $(8,-2)$ & $(15,-15)$
	\\
	\hline
			1-4&
		$(++-, \mathbf{ p}_1,\pm)$ &
		$(
			1, 1, -1, 
			1, 2, -1, \pm5
		)$
		&
		$\pm10$
		& $(8,-2)$ & $(15,-15)$
	\\
	\hline
			1-5&
		$(+--, \mathbf{ p}_1,\pm)$ &
		$(
			1, -1, -1, 
			1, 2, -1, \pm4
		)$
		&
		$\mp8$
		& $(5,-3)$ & $(0, -8)$
	\\
	\hline
			1-6&
		$(-+-, \mathbf{ p}_1,\pm)$ &
		$(
			-1, 1, -1, 
			1, 2, -1, \pm4
		)$
		&
		$\mp8$
		& $(5,-3)$ & $(0, -8)$
	\\
	\hline
			1-7&
		$(--+, \mathbf{ p}_1,\pm)$ &
		$(
			-1, -1, 1, 
			1, 2, -1, \pm4
		)$
		&
		$\mp8$
		& $(5,-3)$ & $(0, -8)$
	\\
	\hline
			1-8&
		$(---, \mathbf{ p}_1,\pm)$ &
		$(
			-1, -1, -1, 
			1, 2, -1, \pm3
		)$
		&
		$\pm6$
		& $(2,-4)$ & $(-7, -1)$
	\\
	\hline
			2-1&
		$(+++, \mathbf{ p}_2,\pm)$ &
		$(
			1, 1, 1, 
			-1, 1, -2, \pm8
		)$
		&
		$\pm16$
		& $(9,-7)$ & $(4, -12)$
	\\
	\hline
			2-2&
		$(-++, \mathbf{ p}_2,\pm)$ &
		$(
			-1, 1, 1, 
			-1, 1, -2, \pm7
		)$
		&
		$\mp14$
		& $(6,-8)$ & $(-11, 3)$
	\\
	\hline
			2-3&
		$(+-+, \mathbf{ p}_2,\pm)$ &
		$(
			1, -1, 1, 
			-1, 1, -2, \pm7
		)$
		&
		$\mp14$
		& $(6,-8)$ & $(-11,3)$
	\\
	\hline
			2-4&
		$(++-, \mathbf{ p}_2,\pm)$ &
		$(
			1, 1, -1, 
			-1, 1, -2, \pm7
		)$
		&
		$\mp14$
		& $(6,-8)$ & $(-11,3)$
	\\
	\hline
			2-5&
		$(+--, \mathbf{ p}_2,\pm)$ &
		$(
			1, -1, -1, 
			-1, 1, -2, \pm6
		)$
		&
		$\pm12$
		& $(3,-9)$ & $(-18, 18)$
	\\
	\hline
			2-6&
		$(-+-, \mathbf{ p}_2,\pm)$ &
		$(
			-1, 1, -1, 
			-1, 1, -2, \pm6
		)$
		&
		$\pm12$
		& $(3,-9)$ & $(-18,18)$
	\\
	\hline
			2-7&
		$(--+, \mathbf{ p}_2,\pm)$ &
		$(
			-1, -1, 1, 
			-1, 1, -2, \pm6
		)$
		&
		$\pm12$
		& $(3,-9)$ & $(-18,18)$
	\\
	\hline
			2-8&
		$(---, \mathbf{ p}_2,\pm)$ &
		$(
			-1, -1, -1, 
			-1, 1, -2, \pm5
		)$
		&	
		$\mp10$
		& $(0,-10)$ & $(-17,33)$
	\\
	\hline
			3-1&
		$(+++, \mathbf{ p}_3,\pm)$ &
		$(
			1, 1, 1, 
			-2, -1, -3, \pm10
		)$
		&
		$\mp60$
		& $(7,-13)$ & $(-34,26)$
	\\
	\hline
			3-2&
		$(-++, \mathbf{ p}_3,\pm)$ &
		$(
			-1, 1, 1, 
			-2, -1, -3, \pm9
		)$
		&
		$\pm54$
		& $(4,-14)$ & $(-41,49)$
	\\
	\hline
			3-3&
		$(+-+, \mathbf{ p}_3,\pm)$ &
		$(
			1, -1, 1, 
			-2, -1, -3, \pm9
		)$
		&
		$\pm54$
		& $(4,-14)$ & $(-41,49)$
	\\
	\hline
			3-4&
		$(++-, \mathbf{ p}_3,\pm)$ &
		$(
			1, 1, -1, 
			-2, -1, -3, \pm9
		)$
		&
		$\pm54$
		& $(4,-14)$ & $(-41,49)$
	\\
	\hline
			3-5&
		$(+--, \mathbf{ p}_3,\pm)$ &
		$(
			1, -1, -1, 
			-2, -1, -3, \pm8
		)$
		&
		$\mp48$
		& $(1,-15)$ & $(-40, 72)$
	\\
	\hline
			3-6&
		$(-+-, \mathbf{ p}_3,\pm)$ &
		$(
			-1, 1, -1, 
			-2, -1, -3, \pm8
		)$
		&
		$\mp48$
		& $(1,-15)$ & $(-40,72)$
	\\
	\hline
			3-7&
		$(--+, \mathbf{ p}_3,\pm)$ &
		$(
			-1, -1, 1, 
			-2, -1, -3, \pm8
		)$
		&
		$\mp48$
		& $(1,-15)$ & $(-40,72)$
	\\
	\hline
			3-8&
		$(---, \mathbf{ p}_3,\pm)$ &
		$(
			-1, -1, -1, 
			-2, -1, -3, \pm7
		)$
		&	
		$\pm42$
		& $(-2, -16)$ & $(-31,95)$
	\\
	\hline
			4-1&
		$(+++, \mathbf{ p}_4,\pm)$ &
		$(
			1, 1, 1, 
			1, 2, 3, \pm 4
		)$
		&
		$\pm24$
		& $(13, 5)$ & $(68, -4)$
	\\
	\hline
			4-2&
		$(-++, \mathbf{ p}_4,\pm)$ &
		$(
			-1, 1, 1, 
			1, 2, 3, \pm3
		)$
		&
		$\mp18$
		& $(10,4)$ & $(37,-5)$
	\\
	\hline
			4-3&
		$(+-+, \mathbf{ p}_4,\pm)$ &
		$(
			1, -1, 1, 
			1, 2, 3, \pm3
		)$
		&
		$\mp18$
		& $(10,4)$ & $(37,-5)$
	\\
	\hline
			4-4&
		$(++-, \mathbf{ p}_4,\pm)$ &
		$(
			1, 1, -1, 
			1, 2, 3, \pm3
		)$
		&
		$\mp18$
		& $(10,4)$ & $(37,-5)$
	\\
	\hline
			4-5&
		$(+--, \mathbf{ p}_4,\pm)$ &
		$(
			1, -1, -1, 
			1, 2, 3, \pm2
		)$
		&
		$\pm12$
		& $(7,3)$ & $(14,-6)$
	\\
	\hline
			4-6&
		$(-+-, \mathbf{ p}_4,\pm)$ &
		$(
			-1, 1, -1, 
			1, 2, 3, \pm2
		)$
		&
		$\pm12$
		& $(7,3)$ & $(14,-6)$
	\\
	\hline
			4-7&
		$(--+, \mathbf{ p}_4,\pm)$ &
		$(
			-1, -1, 1, 
			1, 2, 3, \pm2
		)$
		&
		$\pm12$
		& $(7,3)$ & $(14,-6)$
	\\
	\hline
			4-8&
		$(---, \mathbf{ p}_4,\pm)$ &
		$(
			-1, -1, -1, 
			1, 2, 3, \pm1
		)$
		&	
		$\mp6$
		& $(4,2)$ & $(-1,-7)$
	\\
	\hline
\end{tabular}
\end{center}
\caption{}
\label{tab:Td2}
\end{table}

The data is given in Table \ref{tab:Td2}.
Since $\mathcal{F}_{\mathrm{Td}^1}$ vanishes, it is sufficient to check that 
\begin{equation}\label{eq:check}
	\sum_{\mathbf{q}:\, \mbox{fixed pt}}
    \frac{(c_2c_1^6)(L(X)_{\mathbf{q}})}{\det(L(X)_{\mathbf{q}})}
\end{equation}
does not vanish.
Then, we calculate (\ref{eq:check}) separately as follows.
The sum among $\{(\mbox{1-}i)\}_{1\le i \le 8}$ is given by
$$
		-\frac{38\cdot11^6}{12}-\frac{22}{12}
		+3\biggl(\frac{15\cdot8^6}{10}+\frac{15\cdot2^6}{10}\biggr)
		-3\frac{8\cdot3^6}{8}
		-\frac{7\cdot2^6}{6}+\frac{4^6}{6}
		=
		-4431588,
$$
the sum among $\{(\mbox{2-}i)\}_{1\le i \le 8}$ is given by
$$
		\frac{4\cdot9^6}{16}+\frac{12\cdot7^6}{16}
		+3\biggl(\frac{11\cdot6^6}{14}+\frac{3\cdot8^6}{14}\biggr)
		-3\biggl(\frac{18\cdot3^6}{12}+\frac{18\cdot9^6}{12}\biggr)	
		+\frac{33\cdot10^6}{10}\\
		=1404828,
$$
the sum among $\{(\mbox{3-}i)\}_{1\le i \le 8}$ is given by
$$
		\frac{34\cdot7^6}{60}+\frac{26\cdot13^6}{60}
		-3\biggl(\frac{41\cdot4^6}{54}+\frac{49\cdot14^6}{54}\biggr)
		+3\biggl(\frac{40}{48}+\frac{72\cdot15^6}{48}\biggr)
		-\frac{31\cdot2^6}{42}-\frac{95\cdot16^6}{42}
		=
		-5038812,
$$
and the sum among $\{(\mbox{4-}i)\}_{1\le i \le 8}$ is given by
$$
		\frac{68\cdot 13^6}{24}+\frac{4\cdot5^6}{24}
		-3\biggl(\frac{37\cdot10^6}{18}+\frac{5\cdot4^6}{18}\biggr)
		+3\biggl(\frac{14\cdot7^6}{12}+\frac{6\cdot3^6}{12}\biggr)
		+\frac{4^6}{6}-\frac{7\cdot2^6}{6}
		=
		7921956.
$$
Therefore, we get the total sum
\begin{align}
	\nonumber
\sum_{\mathbf{q}:\, \mbox{fixed pt}}
\frac{(c_2c_1^6)(L(X)_{\mathbf{q}})}{\det(L(X)_{\mathbf{q}})}
	&=
	-4431588+1404828-5038812+7921956
	\\
	\nonumber
	&=
	-143616
	\\
	\label{eq:check_coefficient_2}
	&=
	13056\times(-11)\neq 0.
\end{align}
The last equality (\ref{eq:check_coefficient_2}) confirms the result
(\ref{eq:check_coefficient_1}) in Section~\ref{sec:localization}.


\end{document}